\renewcommand{\@Opargbegintheorem}[4]{%
  #4\trivlist\item[\hskip\labelsep{#3#2\@thmcounterend}]}
\newcommand{\dbdetail}{}
\newcommand{\dbreformulated}{}
\newcommand{\dbold}{}
\newcommand{\dbe}{\color{black}}
\newcommand{\beq}{\begin{equation}}
\newcommand{\eeq}{\end{equation}}
\newcommand{\bal}{\begin{align}}
\newcommand{\eal}{\end{align}}
\newcommand{\beqn}{\begin{equation*}}
\newcommand{\eeqn}{\end{equation*}}
\newcommand{\bfl}{\begin{equation}\begin{aligned}}
\newcommand{\efl}{\end{aligned}\end{equation}}
\newcommand{\bfln}{\begin{equation*}\begin{aligned}}
\newcommand{\efln}{\end{aligned}\end{equation*}}
\newcommand{\into}{\int_{\Omega}}
\newcommand{\intog}{\int_{\Omega\times\Gamma}}
\newcommand{\otg}{\Omega\times\Gamma}
\newcommand{\dx}{\,\text{d}x}
\newcommand{\dt}{\,\text{d}t}
\newcommand{\dxt}{\,\text{d}(x,t)}
\newcommand{\dH}{\text{d}\mathcal{H}}
\newcommand{\bs}{\boldsymbol}
\newcommand{\Hd}{\mathcal{H}}
\newcommand{\F}{\mathcal{F}}
\newcommand{\K}{\mathcal{K}}
\newcommand{\R}{\mathbb{R}}
\newcommand{\Rc}{\overline{\mathbb{R}}}
\newcommand{\TV}{\text{TV}}
\newcommand{\BV}{\text{BV}}
\DeclareMathOperator{\sgn}{sgn}
\newcommand{\ls}{\left\{}
\newcommand{\rs}{\right\}}
\renewcommand{\ll}{\left\langle}
\newcommand{\rr}{\right\rangle}
\DeclareMathOperator{\mydiv}{div}
\newcommand{\mydivx}{\mydiv_x}
\DeclareMathOperator{\mycl}{cl}
\definecolor{oceangreen}{RGB}{0,57,74}
\definecolor{oceangreen-90}{RGB}{0,73,90}
\definecolor{oceangreen-80}{RGB}{0,90,107}
\definecolor{oceangreen-70}{RGB}{17,107,123}
\definecolor{oceangreen-60}{RGB}{60,126,142}
\definecolor{oceangreen-50}{RGB}{93,146,160}
\definecolor{oceangreen-40}{RGB}{126,168,178}
\definecolor{oceangreen-30}{RGB}{157,188,198}
\definecolor{oceangreen-20}{RGB}{189,210,216}
\definecolor{oceangreen-10}{RGB}{222,233,236}
\definecolor{red-lightest}{RGB}{233,191,173}
\definecolor{red-light}{RGB}{227,32,50}
\definecolor{red-medium}{RGB}{182,22,32}
\definecolor{red-dark}{RGB}{126,20,24}
\definecolor{orange-lightest}{RGB}{240,205,178}
\definecolor{orange-light}{RGB}{236,117,4}
\definecolor{orange-medium}{RGB}{202,81,25}
\definecolor{orange-dark}{RGB}{129,53,18}
\definecolor{yellow-lightest}{RGB}{236,217,186}
\definecolor{yellow-light}{RGB}{251,186,0}
\definecolor{yellow-medium}{RGB}{191,134,21}
\definecolor{yellow-dark}{RGB}{117,83,17}
\definecolor{green-lightest}{RGB}{219,215,187}
\definecolor{green-light}{RGB}{149,187,12}
\definecolor{green-medium}{RGB}{126,133,37}
\definecolor{green-dark}{RGB}{50,89,74}
\definecolor{turquoise-lightest}{RGB}{205,219,216}
\definecolor{turquoise-light}{RGB}{59,178,160}
\definecolor{turquoise-medium}{RGB}{36,143,133}
\definecolor{turquoise-dark}{RGB}{0,90,91}
\definecolor{ocean-lightest}{RGB}{198,220,226}
\definecolor{ocean-light}{RGB}{0,174,198}
\definecolor{ocean-medium}{RGB}{0,145,167}
\definecolor{ocean-dark}{RGB}{0,97,122}
\definecolor{cyan-lightest}{RGB}{195,217,229}
\definecolor{cyan-light}{RGB}{60,169,213}
\definecolor{cyan-medium}{RGB}{0,131,173}
\definecolor{cyan-dark}{RGB}{0,87,119}
\definecolor{blue-lightest}{RGB}{195,217,236}
\definecolor{blue-light}{RGB}{111,165,206}
\definecolor{blue-medium}{RGB}{0,105,163}
\definecolor{blue-dark}{RGB}{0,70,114}
\definecolor{gray-cold-lightest}{RGB}{215,223,228}
\definecolor{gray-cold-light}{RGB}{191,203,213}
\definecolor{gray-cold-medium}{RGB}{140,157,171}
\definecolor{gray-cold-dark}{RGB}{87,105,120}
\definecolor{gray-neutral-lightest}{RGB}{226,228,228}
\definecolor{gray-neutral-light}{RGB}{208,208,210}
\definecolor{gray-neutral-medium}{RGB}{156,157,160}
\definecolor{gray-neutral-dark}{RGB}{100,100,102}
\definecolor{gray-warm-lightest}{RGB}{237,236,227}
\definecolor{gray-warm-light}{RGB}{218,214,203}
\definecolor{gray-warm-medium}{RGB}{162,159,145}
\definecolor{gray-warm-dark}{RGB}{118,115,105}
\def\zlima{0.1}
\def\zlimb{0.9}
\journalname{Journal of Mathematical Imaging and Vision}
\begin{document}

\title{Inverse Scale Space Iterations for Non-Convex Variational Problems: The Continuous and Discrete Case
\thanks{The authors acknowledge support through DFG grant LE 4064/1-1 “Functional Lifting 2.0: Efficient Convexifications for Imaging and Vision” and NVIDIA Corporation.}
}
%\subtitle{Do you have a subtitle?\\ If so, write it here}

\titlerunning{Inverse Scale Space Iterations for Non-Convex Variational Problems}

\author{Danielle Bednarski \and Jan Lellmann}

\authorrunning{D. Bednarski, J. Lellmann} % if too long for running head

%\institute{D. Bednarski \at
%              Institute of Mathematics and Image Computing \\
%              University of Lübeck\\
%              Tel.: +49-451-3101-6125\\
%           \and
%           J. Lellmann \at
%              Institute of Mathematics and Image Computing\\
%              University of Lübeck
%}

\institute{Danielle Bednarski \at
                   bednarski@mic.uni-luebeck.de
           \and
           Jan Lellmann \at
           lellmann@mic.uni-luebeck.de \\ ~ \\           
              Institute of Mathematics and Image Computing \\
              University of Lübeck \\
              Lübeck, Germany
}

\date{Received: date / Accepted: date}
% The correct dates will be entered by the editor

\maketitle

\begin{abstract}
 
Non-linear filtering approaches allow to obtain decompositions of images with respect to a non-classical notion of scale, induced by the choice of a convex, absolutely one-homogeneous regularizer. The associated inverse scale space flow  can be obtained using the classical Bregman iteration with quadratic data term. We apply the Bregman iteration to lifted, i.e. higher-dimensional and convex, functionals in order to extend the scope of these approaches to functionals with arbitrary data term. We provide conditions for the subgradients of the regularizer -- in the continuous and discrete setting-- under which this lifted iteration reduces to the standard Bregman iteration. We show experimental results for the convex and non-convex case.

\end{abstract}

\section{Motivation and Introduction}
\label{sec:introduction}

\begin{figure*}[ht]
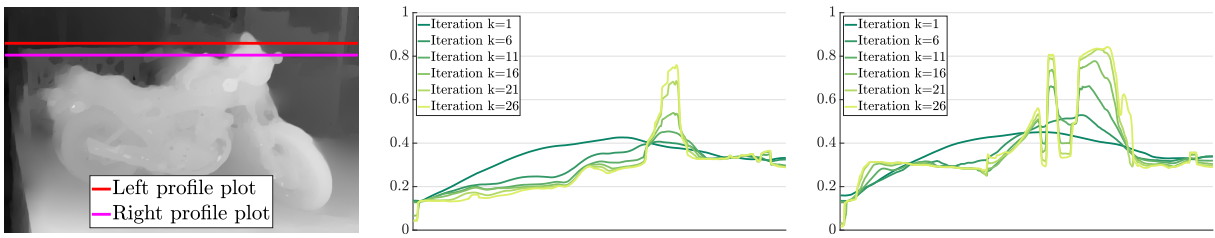

    \begin{subfigure}{.27\linewidth}
        \centering
        \includegraphics[width = \linewidth]{./Images/introduction/profile1}
    \end{subfigure}%
    \hspace{0.4cm}% Space between image A and B
    \begin{subfigure}{.30\linewidth}
        \centering
        \includegraphics[width = \linewidth]{./Images/introduction/profile2}
    \end{subfigure}%
    \hspace{0.4cm}% Space between image A and B
    \begin{subfigure}{.30\linewidth}
        \centering
        \includegraphics[width = \linewidth]{./Images/introduction/profile3}
        \end{subfigure}%        
     \caption{\textbf{Scale-space of solutions for non-convex depth estimation.} 
We apply the sublabel-accurate lifting approach \cite{sublabel_cvpr} to the non-convex problem of depth estimation in order to obtain a convex problem to which the  Bregman iteration~\cite{osher2005iterative} can be applied. In addition to the final depth map \textbf{(left)}, the Bregman iteration generates a scale space of solutions with increasing spatial detail, as can be seen from the two horizontal sections \textbf{(center, right).} 
}
        \label{fig:stereo_profile}
\end{figure*}

In modern image processing tasks, \emph{variational problems} constitute an important tool \mbox{\cite{book_aubert,book_scherzer}}. They are used in a variety of applications such as denoising \mbox{\cite{rof}}, segmentation \mbox{\cite{chan_vese}}, and depth estimation \mbox{\cite{depth1,depth2}}. In this work, we consider variational image processing problems with energies of the form
\begin{equation}
F(u) := \underbrace{\into \rho(x,u(x)) \dx}_{H(u)} + \underbrace{\into \eta(\nabla u(x)) \dx}_{J(u)} ,
\label{eq:problem}
\end{equation}
where the integrand $\eta : \R^d \mapsto \R$ of the \emph{regularizer} is non-negative and convex, and the integrand $\rho : \Omega \times \Gamma \mapsto \Rc$ of the \emph{data term} $H$ is proper, non-negative and possibly non-convex with respect to~$u$. We assume that the domain $\Omega \subset \R^d$ is open and bounded and that the range $\Gamma \subset \R$ is compact. After discretization, we refer to $\Gamma$ as the \emph{label space} in analogy to multi-label problems with discrete range \cite{ishikawa}.

We are mainly concerned with three distinct problem classes.  Whenever we are working with the \emph{total variation} regularizer \cite{book_ambrosio}
\begin{equation}
J(u) = \TV(u),
\end{equation}
we use the abbreviation TV-\eqref{eq:problem}. This problem class is of special interest, since it allows to use a ``sublabel-accurate'' discretization \cite{sublabel_cvpr}.

If the data term is quadratic,
\begin{equation}
\rho(x,u(x)) = \frac{\lambda}{2}(u(x)-f(x))^2\label{eq:rof_data},
\end{equation}
for some input $f$ and $\lambda > 0$ we use the abbreviation ROF-\eqref{eq:problem}. This Rudin-Osher-Fatemi problem is the original use case for the Bregman iteration \cite{osher2005iterative}.  For the quadratic data term~\eqref{eq:rof_data} and  an arbitrary convex, absolutely one-homogeneous regularizer~$\eta$, we write OH-\eqref{eq:problem}. This problem class has been extensively studied  by the inverse scale space flow community. For more details on the inverse scale space flow we refer to the next section.

In this work, we aim to combine the lifting approach \cite{lifting_global_solutions,sublabel_cvpr,sublabel_discretization}, which allows to solve problems a with non-convex data term in a higher-dimensional space in a convex fashion, with the Bregman iteration \cite{osher2005iterative}, which recovers a scale space of solutions. This provides a natural and practical extension of the Bregman iteration to nonconvex data terms. In the following, we briefly review each of these concepts.

\subsection{Inverse Scale Space Flow}

Consider the so-called \emph{inverse scale space flow} \cite{osher2005iterative,burger2006nonlinear,burger2016spectral} equation
\bfl
\partial_s p(s) =  f -u(s,\cdot), \, \, p(s) \in \partial J(u(s,\cdot)), \, \, p(0) =  0,
\label{eq:iss}
\efl
where $J$ is assumed to be convex and absolutely one-homogeneous.
The evolution $u:[0,T]\times \Omega \to \R$ in \eqref{eq:iss} starts at $u(0,\cdot)=\text{mean}(f)$ and $p(s)$ is forced to lie in the subdifferential of $J$.

One can show \cite{gilboa2014total,burger2015spectral} that  during the evolution non-linear \emph{eigenfunctions} of the regularizer with increasingly larger eigenvalues are added to the flow, where a nonlinear \emph{\emph{eigenfunction}} $u$ of $J$ for an eigenvalue $\lambda$ is understood as a solution of the inclusion
\bfl
\lambda u \in \partial J(u).
\efl 
Typically, and in particular for the total variation regularization $J=\TV$,
the flow $u(s,\cdot)$ incorporates details of the input image at progressively finer scales as~$s$ increases. Large-scale details can be understood as the \emph{structure} of the image and fine-scale details as the \emph{texture}. Stopping the flow at a suitable time returns a denoised image, whereas
for $s\to\infty$ the flow converges to the input image. 

By considering the derivative $u_s$, one can define a non-linear decomposition of the input $f$ \cite{burger2015spectral,gilboa2016nonlinear}: First a transformation to the spectral domain of the regularizer is defined. After the transformation, the input data is represented as a linear combination of generalized eigenfunctions of the regularizer. The use of filters in the spectral domain followed by a reconstruction to the spatial domain leads to a high-quality decomposition of the input image into different scales \cite{hait2018spectral}. Similar ideas have been developed for
variational models of the form OH-\eqref{eq:problem} and \textit{gradient flow} formulations   \cite{burger2006nonlinear,benning2012ground,gilboa2013spectral,gilboa2014total,burger2016spectral,gilboa2017semi}. 

As we will see in the next section, the first part of the flow equation {\eqref{eq:iss}} directly relates to the derivative of the quadratic data term {\eqref{eq:rof_data}}.
To the best of our knowledge, the question of how to define similar scale space transformations and filters for the solutions of variational problems with \emph{arbitrary} data terms has not yet been studied.

 \begin{figure*}{H}
     \begin{subfigure}[b]{0.28\textwidth}
          \centering
          \resizebox{\linewidth}{!}{\begin{tikzpicture}
    \pgfmathsetmacro\xmin{-0.05}
    \pgfmathsetmacro\xmax{1.05}
    \pgfmathsetmacro\ymin{\zlima}
    \pgfmathsetmacro\ymax{\zlimb}
    \begin{axis}[
        width=1\textwidth,
        axis lines=left,
        xlabel={$\Omega$}, xlabel near ticks, 
        ylabel={$\Gamma$}, ylabel near ticks,
        xmin=\xmin,xmax=\xmax,
        ymin=\ymin,ymax=\ymax,
        xtick=\empty,ytick=\empty,
        clip=false]
    \addplot graphics [
        xmin=0,xmax=1,
        ymin=0.11,ymax=\zlimb,
    ] {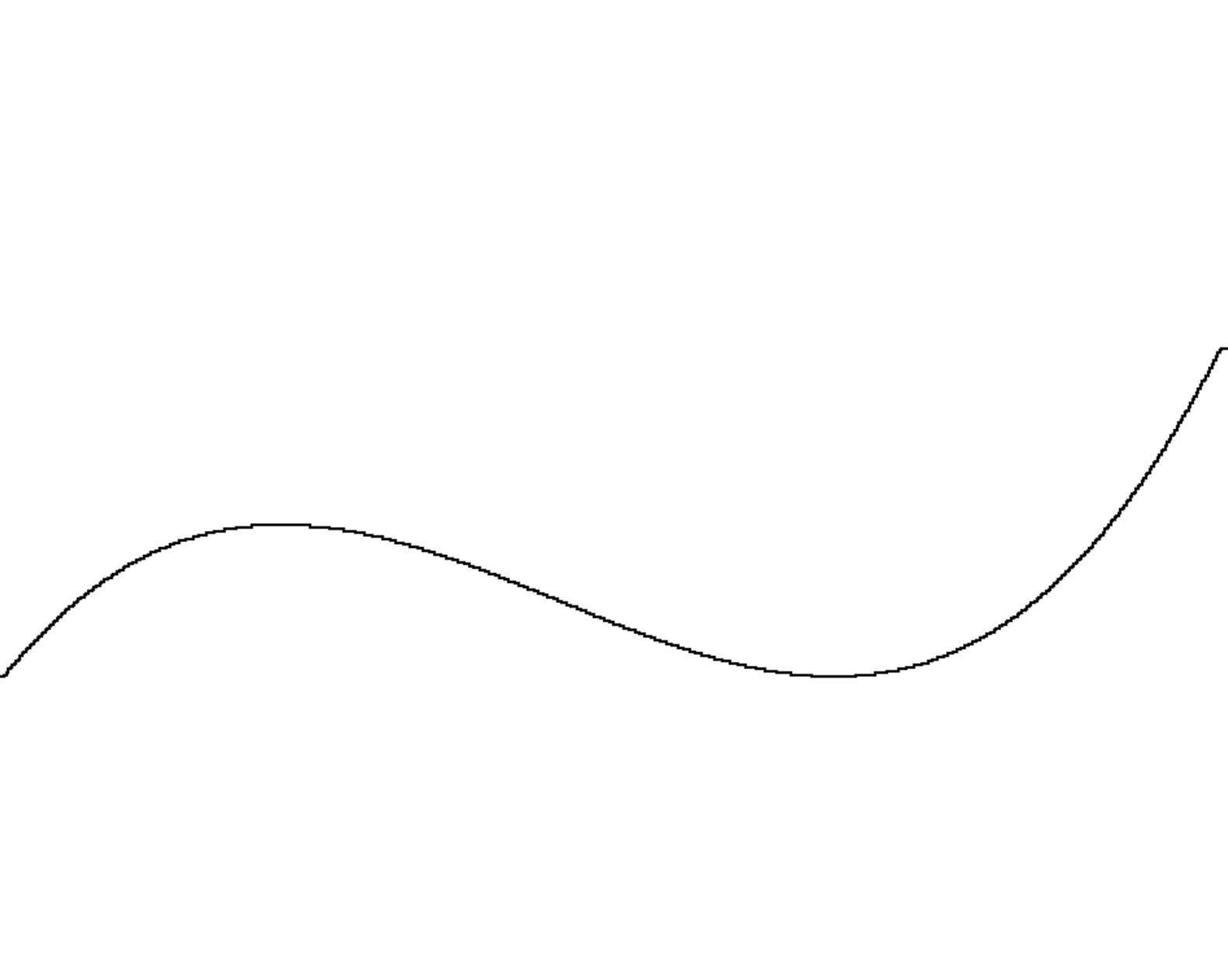};
    %\addplot coordinates {(0,0.1) (1,0.1)};
    \node at (100,600) {$u(x)$};
    \end{axis}
\end{tikzpicture}}  
     \end{subfigure} \hspace{1cm}
     \begin{subfigure}[b]{0.28\textwidth}
          \centering
          \resizebox{\linewidth}{!}{\begin{tikzpicture}
    \pgfmathsetmacro\xmin{-0.05}
    \pgfmathsetmacro\xmax{1.05}
    \pgfmathsetmacro\ymin{\zlima}
    \pgfmathsetmacro\ymax{\zlimb}
    \begin{axis}[
        width=1\textwidth,
        axis lines=left,
        xlabel={$\Omega$},  xlabel near ticks, 
        ylabel={$\Gamma$}, ylabel near ticks,
        xmin=\xmin,xmax=\xmax,
        ymin=\ymin,ymax=\ymax,
        xtick=\empty,ytick=\empty,
        clip=false]
    \addplot graphics [
        xmin=0,xmax=1,
        ymin=\zlima,ymax=\zlimb,
    ] {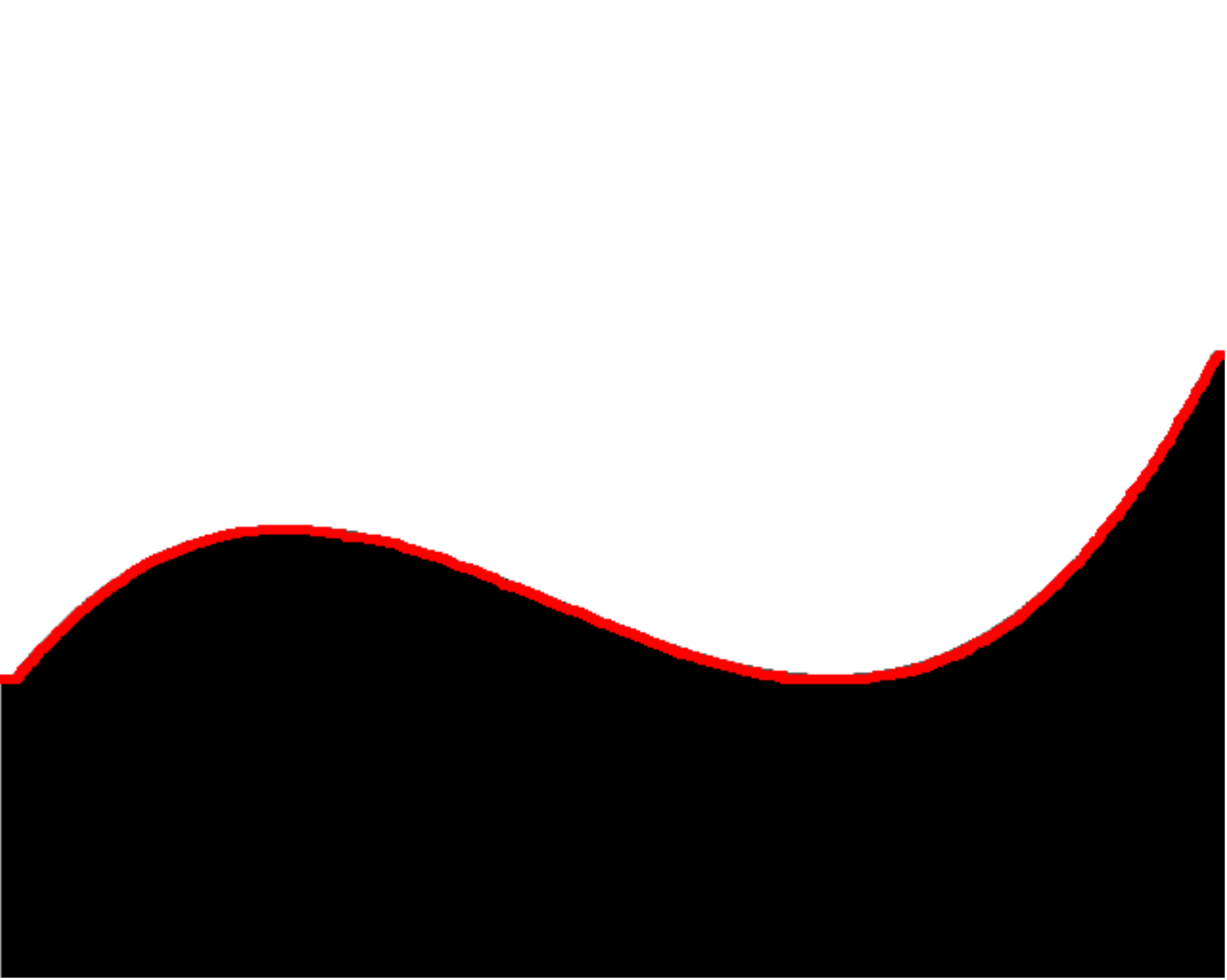};
    \addplot [
        quiver={u=\thisrow{u},v=\thisrow{v}},
        -stealth,
        green-medium,
    ] table [row sep=\\]{
        x y u v \\
        0.10 0.62 0.05 -0.1 \\
        0.18 0.65 0.05 -0.1 \\
        0.26 0.66 0.05 -0.1 \\
        0.34 0.65 0.05 -0.1 \\
        0.42 0.64 0.05 -0.1 \\
    };
    \draw[->, red](26,370)--(26,250);
    \node at (38, 280) {$\textcolor{red}{\nu_{\Gamma_u}}$};
    \node at (30,600) {$\textcolor{green-medium}{\phi(x,t)}$};
    \node at (70,650) {$\textcolor{black}{\bs{1}_u(x,t) = 0}$};
    \node at (70,150) {$\textcolor{white}{\bs{1}_u(x,t) = 1}$};
    \node at (95,500) {$\textcolor{red}{\Gamma_u}$};   
    \end{axis}
\end{tikzpicture}}  
     \end{subfigure} \hspace{1cm}
     \begin{subfigure}[b]{0.28\textwidth}
          \centering
          \resizebox{\linewidth}{!}{\begin{tikzpicture}
    \pgfmathsetmacro\xmin{-0.05}
    \pgfmathsetmacro\xmax{1.05}
    \pgfmathsetmacro\ymin{\zlima}
    \pgfmathsetmacro\ymax{\zlimb}
    \begin{axis}[
        width=1\textwidth,
        axis lines=left,
        xlabel={$\Omega$},  xlabel near ticks,
        ylabel={$\Gamma$}, ylabel near ticks,
        xmin=\xmin,xmax=\xmax,
        ymin=\ymin,ymax=\ymax,
        xtick=\empty,ytick=\empty,
        clip=false]
    \addplot graphics [
        xmin=0,xmax=1,
        ymin=\zlima,ymax=\zlimb,
    ] {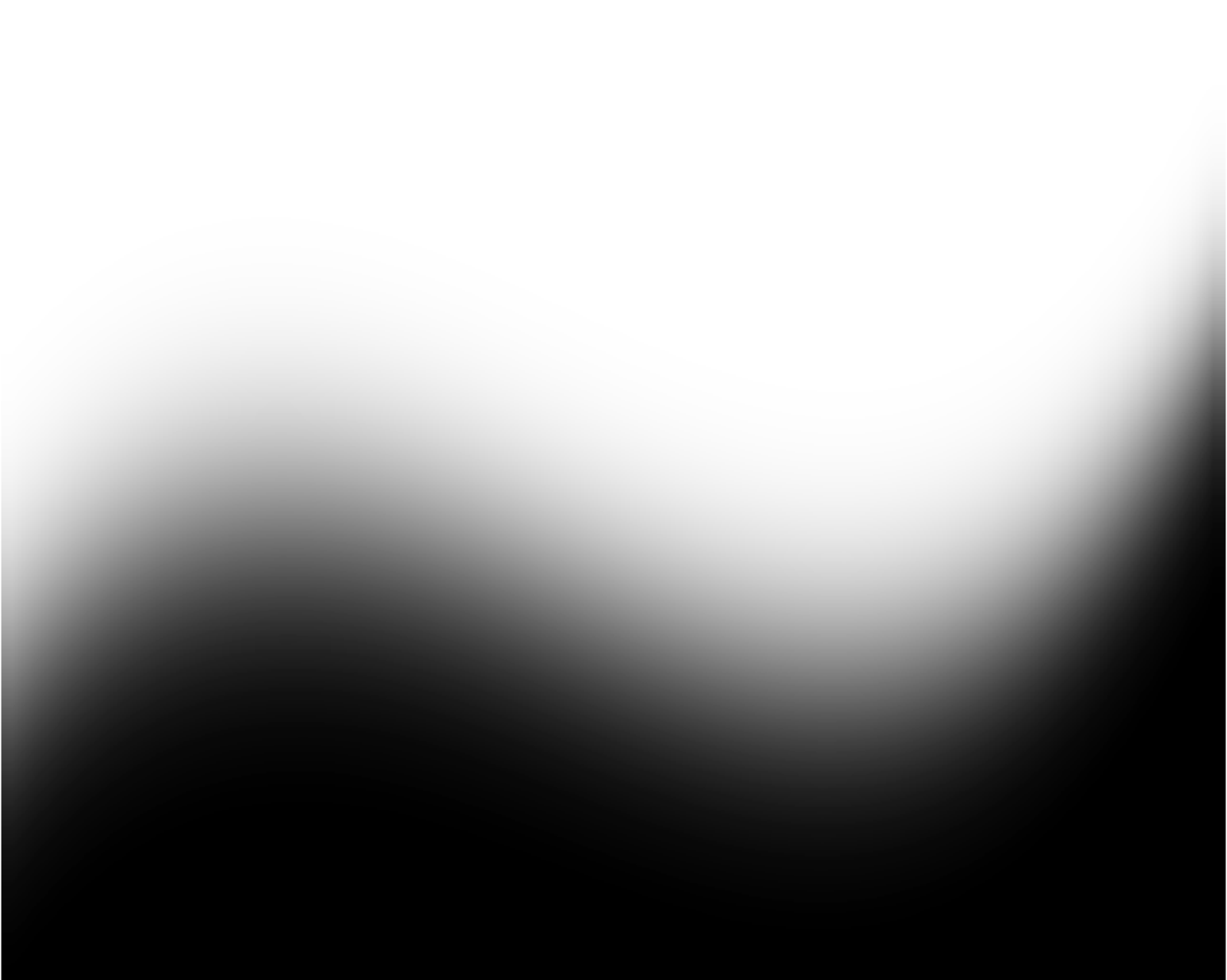};
    \addplot [
        quiver={u=\thisrow{u},v=\thisrow{v}},
        -stealth,
        green-medium,
    ] table [row sep=\\]{
        x y u v \\
        0.10 0.62 0.05 -0.1 \\
        0.18 0.65 0.05 -0.1 \\
        0.26 0.66 0.05 -0.1 \\
        0.34 0.65 0.05 -0.1 \\
        0.42 0.64 0.05 -0.1 \\
    };
    \node at (30,600) {$\textcolor{green-medium}{\phi(x,t)}$};
    \node at (70,650) {$\textcolor{black}{v(x,t) = 0}$};
    \node at (70,150) {$\textcolor{white}{v(x,t) = 1}$};
    \end{axis}
\end{tikzpicture}}
     \end{subfigure}
     \caption{ \textbf{Calibration based lifting.} Function $u$ in original solution space \textbf{(left)}. The idea in calibration based lifting is to represent functions $u$ by the higher dimensional indicator function of their subgraph $\bs{1}_u$. The variational problem is then rewritten as a flux of vector fields $\phi$ through the complete graph $\Gamma_u$; the latter is the measure theoretic boundary of the subgraph. Here, $\nu_{\Gamma_u}$ denotes the inner (downwards-pointing) unit normal \textbf{(middle)}. By enlarging the feasible set to the convex hull of the subgraph indicator functions, which also allows diffuse solutions \textbf{(right)}, one obtains an overall convex problem. }
     \label{fig:calibration}
 \end{figure*}

\subsection{Bregman Iteration}

For problems in the class OH-\eqref{eq:problem}, the inverse scale space flow can be understood \cite{burger2006nonlinear} as a continuous limit of the \emph{Bregman iteration} \cite{osher2005iterative}.
The Bregman iteration uses the Bregman divergence first introduced in \cite{bregman1967relaxation}. 
For both the data term~$H$ and regularizer $J$ being non-negative and convex (!), the Bregman iteration is defined as: 

 \noindent              \fbox{%
                \begin{minipage}[t]{0.95\linewidth}
                \textbf{Algorithm 1: Bregman iteration}
                        \begin{itemize}
                        \item[] Initialize $p_0 = 0$ and repeat for $k = 1,2,...$
                                                \begin{flalign}
                                u_k & \in \arg\min_{u} \{ H(u) + J(u) - \langle p_{k-1}, u \rangle \} , \label{eq:breg_uk}\\
                                p_{k} & \in \partial J(u_k) \label{eq:breg_qk}. 
                        \end{flalign}                   
                        \end{itemize}                                           
                \end{minipage}} \\

In case of the ROF-\eqref{eq:problem} problem, the subgradient $p_k$ can be chosen explicitly as $p_k = p_{k-1} - \lambda (u_k - f)$. Rearranging this equation as $(p_k - p_{k-1})/\lambda =-(u_k-f)$ shows that it is simply a time step for \eqref{eq:iss}. An extensive analysis of the iteration including well-definedness of the iterates and convergence results can be found in~\cite{osher2005iterative}.

Further extensions include the \emph{split Bregman method} for $\ell_1$-regularized problems \cite{goldstein2009split} and the \emph{linearized Bregman iteration} for compressive sensing and sparse denoising \cite{cai2009linearized,osher2011fast}. However, applying the Bregman iteration to variational problems with non-convex data term $H$ is not trivial since the well-definedness of the iterations, the use of subgradients, as well as the convergence results in \cite{osher2005iterative} rely on the convexity of the data term. 
In \cite{hoeltgen2015bregman}, the Bregman iteration was used to solve a non-convex \emph{optical flow} problem, however, the approach relies on an iterative reduction to a convex problem using first-order Taylor approximations. 

In this work, we aim to apply the Bregman iteration to energies with a non-convex data term such as the non-convex stereo matching problem (Fig.~\ref{fig:stereo_profile} and Fig.~\ref{fig:results2}). In order to do so, we follow a \emph{lifting approach} \cite{lifting_global_solutions,sublabel_cvpr,sublabel_discretization}: Instead of minimizing the non-convex problem 
\beq
\inf_{u\in U} \{ H(u)+J(u) \}\label{eq:prob-ulf}
\eeq
over some suitable (discrete or function) space $U$, 
we solve a \emph{lifted problem}
\bfl
\inf_{v\in \mathcal{U}} \{ \mathcal{H}(v)+\mathcal{J}(v) \}\label{eq:prob-lf}
\efl
over a larger space $\mathcal{U}$ but with \emph{convex} energies $\mathcal{H}, \mathcal{J}$ in a way that allows to recover solutions $u$ of the original problem \eqref{eq:prob-ulf} from solutions $v$ of the lifted problem \eqref{eq:prob-lf}. The Bregman iteration can then be performed on the -- now convex -- lifted problem \eqref{eq:prob-lf}:

 \noindent              \fbox{%
                \begin{minipage}[t]{0.95\linewidth}
                \textbf{Algorithm 2: Lifted Bregman iteration}
                        \begin{itemize}
                        \item[] Initialize $p_0 = 0$ and repeat for $k = 1,2,...$
                                                \begin{flalign}
                                v_k & \in \arg\min_{v\in\mathcal{U}} \{ \mathcal{H}(v) + \mathcal{J}(v) - \langle p_{k-1}, v \rangle \} , \label{eq:breg_uk_lifted}\\
                                p_{k} & \in \partial \mathcal{J}(v_k) \label{eq:breg_qk_lifted}. 
                        \end{flalign}                   
                        \end{itemize}                                           
                \end{minipage}} \\
                
This allows to extend the Bregman iteration to non-convex data terms. Since the Bregman iteration crucially depends on the choice of subgradients, it is not evident how the original (Alg.~1) and lifted (Alg.~2) Bregman iteration relate to each other in case of the prototypical ROF-\eqref{eq:problem} problem and we analyse this question in the continuous and discrete setting.

\subsection{Outline and Contribution}

This work is an extension of the conference report \cite{bednarski2021inverse}. Compared to the report, we expand our theoretical analysis of the lifted Bregman iteration to the fully continuous setting and present analogous statements about the equivalence of the original and lifted Bregman iteration under certain assumptions. Additional numerical experiments demonstrate that eigenfunctions of the TV regularizer appear according to the size of their eigenvalues at different steps of the iteration -- this also holds true for the non-convex and non-linear stereo matching problem.

In section 2, we summarize the lifting approach for problems of the form TV-\eqref{eq:problem} both in continuous (function space) and the discretized (sublabel-accurate)\ formulations. We derive conditions under which the original and lifted Bregman iteration are equivalent in the continuous (section 3) and in the discretized (section 4--5) setting. The conditions in the discretized setting are in particular met by the anisotropic TV. In section 6, we validate our findings experimentally by comparing the original and lifted iteration on the convex ROF-\eqref{eq:problem} problem and present numerical results on the non-convex stereo matching problem.

\subsection{Notation}
We denote the \emph{extended real line} by $\Rc := \R \cup \{ \pm \infty\}$. Given a function $f:\R^n\mapsto\Rc$, the conjugate $f^*:\R^n\mapsto\Rc$ is defined as \cite[Ch.~11]{book_rock_variational}
\begin{equation}
f^*(u^*) := \sup_{u\in\R^n} \{ \langle u^*, u \rangle -f(u) \}.
\end{equation}
If $f$ has a proper convex hull, both the conjugate and biconjugate are proper, lower semi-continuous and convex \cite[Ch.~11]{book_rock_variational}. 
The \emph{indicator function} of a set $C$ is defined as 
\begin{equation} \delta_C(x) := \begin{cases} 0, &\text{ if } x \in C, \\ \infty, &\text{ else.} \end{cases} 
\end{equation}
The Fenchel conjugate can similarly defined on general normed spaces by taking $u^\ast$ from the dual space \cite[Def.~I.4.1]{book_ekeland}.

Whenever $\bs{u}$ denotes a vector, we use subscripts $\bs{u}_k$ to indicate an iteration or sequence, and superscripts $\bs{u}^k$ to indicate the $k$-th value of the vector. We use calligraphic letters to denote lifted  energies in the continuous setting (e.g., $\F, \K, \mathcal{TV}$) and bold letters to denote lifted energies in the discrete setting (e.g., $\bs{F}, \bs{K}, \bs{TV}$).

The total variation regularizer is defined as
\begin{align}
TV(u) &:= \sup_{\substack{\psi \in C_c^\infty(\Omega; \, \R^d), \\  \|\psi (x) \|_2 \leq 1}}  \ls \into \langle u, - \text{div } \psi \rangle \dx \rs.
\label{eq:tv_def}
\end{align}
By $\BV(\Omega;\Gamma)$ we denote the set of functions $u$ that are of bounded variation, i.e. for which $\TV(u)<\infty$.

\section{Lifting Approach}

\subsection{Continuous Setting}\label{ss:contset} %

Our work is based on methods for scalar but continuous range $\Gamma$ with first-order regularization in the spatially continuous setting 
\cite{alberti2003calibration,chambolle2001convex,lifting_tv,lifting_global_solutions}: The feasible set of the scalar-valued functions $u\in W^{1,1}(\Omega;\Gamma)$ is embedded into the convex set of functions which are of bounded variation on every compact subset of $\Omega\times\R$ (i.e.  $v\in \BV_{\text{loc}}(\Omega\times\R; [0,1])$) by associating each function $u$ with the
characteristic function of its subgraph, 
i.e.,
\begin{align} 
1_u(x,z) :=
\begin{cases}
1, & u(x) > z, \\
0, & \text{otherwise.} \\
\end{cases}
\end{align}
To extend the energy $F$ in \eqref{eq:problem} for $\Gamma = \R$ onto this larger space, a \emph{lifted} convex functional $\mathcal{F}: \BV_{\text{loc}}(\Omega\times\R; [0,1])\to\R$ based on the distributional derivative $Dv$ is defined \cite{alberti2003calibration}:
\bfl
\mathcal{F}(v) := \sup_{\phi \in \mathcal{K}} \ls \int_{\Omega \times \Gamma} \langle \phi, Dv \rangle \rs,
\label{eq:lifted_F}
\efl
where the admissible dual vector fields are given by
\bfl
\mathcal{K} := \{ (\phi_x, \phi_t) \in & C_0(\Omega\times\R; \R^d\times\R ):  \\ & \phi_t(x,t) + \rho(x,t) \geq \eta^*(\phi_x(x,t)), \\ & \forall (x,t) \in \Omega\times\R \} .
\label{eq:constraint_set_K}
\efl
If $v$ is indicator function of the subgraph of $u$, i.e. $v=1_u$, one has \cite{lifting_global_solutions,bouchitte2018duality}
\bfl
Dv = \nu_{\Gamma_u}(x,t) \Hd^d \llcorner \Gamma_u(x,t),
\label{eq:Dv}
\efl
where $\Gamma_u$ denotes the measure theoretic boundary of the subgraph (i.e. the complete graph of $u$ or the singular set of $1_u$) and $\nu_{\Gamma_u}$ the inner (downwards-pointing) unit normal to $\Gamma_u$. For smooth $u$, the latter is
\bfl
\nu_{\Gamma_u} = \frac{1}{\sqrt{1+ \| \nabla u \|^2}}\begin{pmatrix} \nabla u(x) \\ -1 \end{pmatrix}. \label{eq:nu}
\efl
See Fig.~\ref{fig:calibration} for a visualization.

In case of the $\TV$ regularizer \eqref{eq:tv_def}, $\eta$ is convex and one-homogeneous, and its Fenchel conjugate $\eta^*$ is the indicator function of a convex set. Therefore, the constraint in \eqref{eq:constraint_set_K} can be  separated \cite{sublabel_discretization}:
\begin{align}
\mathcal{F}(v) &= \sup_{\substack{\phi_x \in \mathcal{K}_x \\ \phi_t \in \mathcal{K}_t}} \ls \int_{\otg} \ll \begin{pmatrix} \phi_x, \phi_t \end{pmatrix}, Dv \rr \rs, \label{eq:lifted_Fxt} \\
\begin{split} \mathcal{K}_x := \{ & \phi_x \in  C_0(\Omega\times\R; \R^d):  \\ & \phi_x(x,t) \in \text{dom} \{ \eta^* \}, \quad \forall(x,t) \in  \Omega\times\R \}, \label{eq:constraint_set_Kx} \end{split} \\
\begin{split} \mathcal{K}_t := \{ & \phi_t \in  C_0(\Omega\times\R; \R): \\ - &\phi_t(x,t) \leq \rho(x,t),  \quad \forall (x,t) \in \Omega\times\R \}. \label{eq:constraint_set_Kt} \end{split}
\end{align}

In \cite{lifting_global_solutions,bouchitte2018duality}, the authors show that $F(u) = \mathcal{F}(1_u)$ holds for any $u\in W^{1,1}$. Moreover, if the non-convex set $ \{ 1_u : u\in W^{1,1}  \} $ is relaxed to the convex set
\bfl
C := \{ v \in & \BV_{\text{loc}} (\Omega\times\R, [0,1]): \\ & v(\cdot,t) = 1\; , \, \text{for a.e.~ } t \leq \min(\Gamma ), \\ & v(\cdot, t) = 0\;, \,  \text{for a.e.~ } t > \max(\Gamma)  \},\label{eq:setc}
\efl 
any minimizer of the lifted problem $\inf_{v\in C}\mathcal{F}(v)$ can be transformed into a global minimizer of the original nonconvex problem $\inf_{u\in W^{1,1}}\mathcal{F}(1_u)$  by  thresholding: \dbdetail The thresholding process does not change the energy, and produces a characteristic function of the form $1_u$ for some $u$ in the original function space, on which $\mathcal{F}(1_u)$ and $F(u)$ agree \cite[Thm.~3.1]{lifting_global_solutions}\cite[Thm.~4.1,Prop.~4.13]{bouchitte2018duality}. 

The lifting approach has also been connected to dynamical optimal transport and the Benamou-Brenier formulation, which allows to incorporate higher-order regularization \cite{vogt2020connection}. 

\paragraph{Lifted total variation.}
For $H\equiv 0$ and $J=\TV$ it turns out that $\phi_t \equiv 0$ is optimal in \eqref{eq:lifted_Fxt}-\eqref{eq:constraint_set_Kt}. This can either be derived from the fact that $(Dv)_t$ is non-positive (see \cite{lifting_global_solutions}) and that $\phi_t$ is non-negative due to the constraint \eqref{eq:constraint_set_Kt}. If $v = 1_u$ for sufficiently smooth $u$, one can also easily see $\phi_t\equiv 0$ by  applying \eqref{eq:Dv}-\eqref{eq:nu} and again arguing that $\phi_t\geq 0$ holds due to the constraint \eqref{eq:constraint_set_Kt}: \dbe
\begin{align*}
\mathcal{TV}(1_u) &= \sup_{\substack{\phi_x \in \mathcal{K}_x \\ \phi_t \in \mathcal{K}_t}} \ls \int_{\Gamma_u} \ll \begin{pmatrix} \phi_x \\ \phi_t \end{pmatrix}, \begin{pmatrix} \nabla u(x) \\ -1 \end{pmatrix} \rr \frac{\dH^d}{\sqrt{1+\|\nabla u\|^2}} \rs .
\end{align*}
Subsequently, we can reduce the lifted total variation for any $v=1_u$ (i.e., indicator functions of a subgraph of $u\in W^{1,1}$) to
\begin{align}
\mathcal{TV}(v) 
% &=  \sup_{\substack{\phi_x \in C_0(\otg; \, \R^d) \\ \| \phi_x(x,t) \|_2 \leq 1}} \ls \int_{\Gamma_u} \ll  \phi_x , \nabla u(x)  \rr \frac{\dH^d}{\sqrt{1+\|\nabla u\|^2}} \rs\\
&=  \sup_{\substack{\phi_x \in C_0(\otg; \, \R^d) \\ \| \phi_x(x,t) \|_2 \leq 1}} \ls \intog \ll \phi_x, (Dv)_x \rr \rs .
\end{align}
Furthermore,  the following equality holds \cite[Thm.~3.2]{lifting_global_solutions}: 
\bfl
\mathcal{TV}(1_u) = \TV(u).
\label{eq:tv_equality}
\efl

\subsection{Discrete Sublabel-Accurate Setting}
\label{sec:sa_relax}

\paragraph{Discrete setting.} 
In a fully discrete setting with discretized domain and finite range $\Gamma$,
Ishikawa and Geiger proposed  first lifting strategies for the labeling problem 
\cite{ishikawa_geiger,ishikawa}. Later the relaxation of the labeling problem was studied in a spatially continuous setting with binary \cite{chan_vese,chan_relax} and multiple labels \cite{depth2,lifting_continuous_multiclass}. 

\paragraph{Sublabel-accurate discretization.}
In practice, a straightforward discretization of the label space $\Gamma$ during the implementation process leads to artifacts and the quality of the solution strongly depends on the number and positioning of the chosen discrete labels. Therefore, it is advisable to employ a \emph{sublabel-accurate} discretization \cite{sublabel_cvpr}, which allows to preserve information about the data term in between discretization points, resulting in smaller problems.
In \cite{sublabel_discretization}, the authors point out that this approach is closely linked to the approach in \cite{lifting_global_solutions} when a combination of piecewise linear and piecewise constant basis functions is used for discretization. We also refer to \cite{mollenhoff2019lifting} for  an extension of the sublabel-accurate lifting approach to arbitrary convex regularizers.

For reference, we provide a short summary of the lifting approach with sublabel-accurate discretization for TV-\eqref{eq:problem} problems using the notation from \cite{sublabel_cvpr}. The approach comprises three steps:

\paragraph{Lifting of the label space.} First, we choose $L$ labels $ \gamma_1 < \gamma_2 < ... < \gamma_L $ such that $ \Gamma = [ \gamma_1, \gamma_L ] $. These labels decompose the label space $\Gamma$ into $l := L-1$ \emph{sublabel spaces} $\Gamma_i := [\gamma_i, \gamma_{i+1}]$. Any value in $\Gamma$ can be written as
\begin{equation}
\gamma_i^{\alpha} := \gamma_i + \alpha (\gamma_{i+1} - \gamma_i ),
\label{eqdef:sa_gammaia}
\end{equation}
for some $i \in \{1, 2, ..., l\}$ and $\alpha \in [0,1]$. The lifted representation of such a value in $\R^l$ is defined as
\begin{equation}
\bs{1}_i^{\alpha} := \alpha \bs{1}_i + (1-\alpha )\bs{1}_{i-1} \in \R^l,
\end{equation}
where $\bs{1}_i \in \R^l$ is the vector of $i$ ones followed by $l-i$ zeroes. The \emph{lifted label space} -- which is non-convex -- is given as
$\boldsymbol{\Gamma} := \{ \bs{1}_i^{\alpha}  \in\R^l |  i \in \{ 1,2,...,l \},\, \alpha \in [0,1] \}$.

 If $\bs{u}(x) = \bs{1}_i^{\alpha} \in \boldsymbol{\Gamma}$ for (almost) every $x$, it can be  mapped uniquely to the equivalent value in the unlifted (original) label space by  
\begin{equation}
u(x) = \gamma_1 + \sum_{i=1}^l \bs{u}^i(x)(\gamma_{i+1} - \gamma_i ).
\label{eq:projection}\end{equation}
We refer to such functions $\bs{u}$ as \emph{sublabel-integral.} 

\paragraph{Lifting of the data term.} Next, a lifted formulation of the data term is derived that in effect approximates the energy locally convex between neighbouring labels,  justifying  the ``sublabel-accurate'' term.
For the possibly non-convex data term of \eqref{eq:problem}, the lifted -- yet still non-convex -- representation 
is defined as $\bs{\rho}(x,\cdot):\R^l \mapsto \Rc$, 
\begin{align}
\bs{\rho}(x,\bs{u}(x)) := \inf_{\substack{i\in\{1,...,l\}, \\ \alpha \in [0,1]}}  \left \{ \rho(x,\gamma_i^{\alpha}) + \delta_{\bs{1}_i^{\alpha}}(\bs{u}(x)) \right \}.
\label{eq:def_data_integrand}
\end{align} 
Note that the domain is $\R^l$ and not just $\bs{\Gamma}$. Outside of the lifted label space $\boldsymbol{\Gamma}$  the lifted representation $\bs{\rho}$ is set to $\infty$. Applying the definition of Legendre-Fenchel conjugates twice  with respect to the second variable  results in a relaxed -- and convex -- data term:
\begin{equation}
 \bs{H}(\bs{u}) = \int_{\Omega} \bs{\rho}^{**}(x, \bs{u}(x)) dx.
\label{eq:def_data}
\end{equation}
For explicit expressions of $\bs{\rho}^{**}$ in the linear and non-linear case we refer  to \cite[Prop.~1, Prop.~2]{sublabel_cvpr}.

\paragraph{Lifting of the total variation regularizer.} Lastly, a lifted representation of the (isotropic) total variation regularizer is established, building on the theory developed in the context of multiclass labeling approaches \cite{lifting_continuous_multiclass,lifting_tv_local_envelope}.
The method heavily builds on representing the total variation regularizer with the help of \emph{Radon measures} $D\bs{u}$. For further details we refer the reader to \cite{book_ambrosio}.
The lifted -- and non-convex -- integrand $\boldsymbol{\phi}: \R^{l\times d} \mapsto \Rc$ is defined as
\begin{align}
\boldsymbol{\phi}(\bs{g}) := &\inf_{\substack{ 1\leq i\leq j\leq l, \\ \alpha, \beta \in [0,1]}} \ls
| \gamma_i^{\alpha} - \gamma_j^{\beta} | \cdot \| v \|_2 + \delta_{(\bs{1}_i^{\alpha} - \bs{1}_j^{\beta}) v^{\top}}(\bs{g}) \rs. 
\label{eq:def_reg_integrand}
\end{align}
Applying the definition of Legendre-Fenchel conjugates twice  results in a relaxed -- and convex -- regularization term:
\begin{flalign}
\bs{TV}(\bs{u}) := \int_{\Omega} \boldsymbol{\phi}^{**}(D\bs{u}),
\label{eq:def_reg}
\end{flalign}
where $D\bs{u}$ is the distributional derivative in the form of a Radon measure. For isotropic TV, it can be shown that, for  $\bs{g}\in\R^{l\times d}$, 
\begin{flalign}
&\bs{\phi}^{**}(\bs{g}) = \sup_{\bs{q}\in\bs{K}_{\text{iso}}} \ls \langle \bs{q}, \bs{g} \rangle \rs , \\
&\bs{K}_{\text{iso}} = \left\{ \bs{q}\in\R^{l\times d}  \middle| \|  \bs{q}_i \|_2 \leq \gamma_{i+1} -\gamma_i,  \forall i=1,...,l \right\}.
\label{eq:K_iso}
\end{flalign}  
For more details we refer to \cite[Prop.~4]{sublabel_cvpr} and \cite{lifting_tv_local_envelope}. 

Unfortunately isotropic TV in general does not allow to prove global optimality for the discretized system,  as there is no known coarea-type formula for the discretized isotropic case.  Therefore, we also consider the lifted \emph{anisotropic} ($L^1$) TV,  i.e., $\TV_{\text{an}}(u):=\int_\Omega \|Du\|_1$. With the same strategy as in the isotropic case \eqref{eq:K_iso}, one obtains 
\begin{flalign}
\label{eq:K_an1}
\bs{K}_{\text{an}} = \left\{ \bs{q}\in\R^{l\times d}  \middle|  \| \bs{q}_i \|_\infty \leq \gamma_{i+1} -\gamma_i,  \forall i = 1,...,l \right\}\\
= \bigcap_{j=1, \hdots  ,d} \left\{  \bs{q}  \middle|   \| \bs{q}_{i,j} \|_2 \leq \gamma_{i+1} - \gamma_i ,  \forall i=1,...,l \right\} .
\label{eq:K_an2}
\end{flalign} 
\begin{proof}[Derivation of $\bs{K}_{\text{an}}$]
Here the maximum norm in {\eqref{eq:K_an1}} originates as the dual norm to $\|\cdot\|_1$. To see the equality to {\eqref{eq:K_an2}}, let $\bs{q} \in \R^{l\times d}$. Then
\begin{flalign}
 \| \bs{q}_i \|_\infty &\leq \gamma_{i+1} - \gamma_i , \, \forall i=1,\hdots , l ,   \\
\Leftrightarrow \max_{j=1, \hdots  ,d} | \bs{q}_i^j | & \leq \gamma_{i+1} - \gamma_i  , \,  \forall  i=1,\hdots , l.   
\end{flalign}
Since $q_i^j$ is a scalar, this is equivalent to
\begin{flalign}
\max_{j=1, \hdots  ,d} \| \bs{q}_{i,j} \|_2 \leq \gamma_{i+1} - \gamma_i  ,  \, &\forall  i=1,\hdots , l , \\
\begin{split}
 \Leftrightarrow  \| \bs{q}_{i,j} \|_2  \leq \gamma_{i+1} - \gamma_i  ,  \, &\forall  i = 1, ..., l, \\ \, &\forall j=1,\hdots , d.
\end{split}
\end{flalign}
This shows the equality \eqref{eq:K_an1}--\eqref{eq:K_an2}. \qed 
\end{proof}

Together, the previous three sections allow us to formulate a version of the problem of minimizing the lifted energy \eqref{eq:lifted_F} over the relaxed set \eqref{eq:setc} that is discretized in the label space $\Gamma$:
\begin{equation}
\inf_{\bs{u}\in\BV(\Omega,\bs{\Gamma})} \ls \int_{\Omega} \bs{\rho}^{**}(x, \bs{u}(x)) d x +  \int_{\Omega} \boldsymbol{\phi}^{**}(D\bs{u}) \rs.
\end{equation}
Once the non-convex set $\bs{\Gamma}$ is relaxed to its convex hull, we obtain a fully convex lifting of problem TV-\eqref{eq:problem} similar to \eqref{eq:prob-lf}, which can now be spatially discretized.

\section{Equivalence of the Lifted Bregman Iteration in the Continuous Setting}
\label{sec:lifted_bregman}

An interesting question is to find conditions under which the Bregman iteration (Alg.~1) and the more general lifted Bregman iteration (Alg.~2) are equivalent in the following sense:
If $u_k$ is a solution of the original Bregman iteration, then its lifted representation (i.e., the indicator function of its subgraph $1_{u_k}$ in the fully continuous setting) is a solution $v_k$ of the lifted Bregman iteration; or \emph{the} solution if it is unique. The exact definition is not trivial, as there is potential ambiguity in choosing the subgradient term in the lifted setting.

In this section, we consider the problem in the function space with continuous range $\Gamma$, before moving on to the discretized setting in the later sections.

\subsection{Subdifferential of the Total Variation}
\label{sss:subdiff_tv}

The Bregman iteration crucially requires elements from the subdifferential $\partial J$ of the regularizer. Unfortunately, for the choice $J=\TV$, which is the basis of the classical inverse scale space flow, this requires to study elements from $\BV^\ast$, i.e., the dual space of $\BV$, which is not yet fully understood. 

In order to still allow a reasonably accurate discussion, we make two simplifying assumptions: Firstly, we restrict ourselves to the case
$u\in W^{1,1}$ and $\Omega \subseteq \R^2$, which allows to embed $W^{1,1} \hookrightarrow L^2$ by the Sobolev embedding theorem \cite[Thm.~10.9]{book_alt} 

Secondly, we will later assume that the subgradients can be represented as $L^2$ functions. While this is a rather harsh condition, and there are many subgradients of $\TV$ outside of this restricted class, these assumptions still allow to formulate the major arguments in an intuitive way without being encumbered by too many technicalities.

The total variation \eqref{eq:tv_def} can be viewed as a support function:
\begin{align}
&\TV(u) = \sigma_\Psi (u) \\
&\Psi := \ls - \mydiv \psi \middle| \psi \in C_c^\infty(\Omega; \, \R^d) ,  \|\psi \|_\infty \leq 1 \rs .
 \label{eq:Psi_TV}
\end{align}
Using the relation $\sigma_C^\ast = \delta_{\text{cl } C}$ for convex sets $C$, its Fenchel conjugate is \cite{book_ekeland}[Def.~I.4.1, Example I.4.3] 
\begin{align}
&\TV^*(p) = \sigma^*_\Psi (p) = \delta_{\mycl \Psi} (p). 
\label{eq:fenchel_tv}
\end{align}
According to \cite[Prop.~I.5.1]{book_ekeland} it holds $p\in\partial\TV(u)$ iff 
\begin{align}
\TV(u) =  \langle u, p \rangle - \delta_{\mycl \Psi}(p).
\end{align}
The closure of $\Psi$ with respect to the $L^2$ norm (note that we have restricted ourselves to $L^2$) is \cite[proof of Prop.~7]{bredies2016pointwise}  
\bfl
\mycl \Psi = \ls - \mydiv \psi \, | \, \psi \in W_0^2 (\mydiv; \Omega) , \, \| \psi \|_\infty \leq 1\rs ,
\label{eq:clpsi}
\efl
where
\begin{align}
W^2 (\mydiv; \Omega) &:= \ls \psi \in L^2(\Omega, \R^d) \, | \,  \mydiv \psi \in L^2(\Omega) \rs , \\
\| \psi \|_{W^2 (\mydiv)} &:= \| \psi \|_{L^2}^2 + \| \mydiv \psi \|_{L^2}^2, \\
W_0^2 (\mydiv; \Omega) &:= \overline{C_c^\infty(\Omega; \R^d)}^{\| \cdot \|_{W^2 (\mydiv)}}.
\end{align}
Consequently, in our setting with $u\in W^{1,1}\subset L^2$, we know that $u^\ast\in L^2$ is a subgradient iff $u^\ast = -\mydiv \psi$ for some $\psi\in W_0^2(\mydiv;\Omega)$ which satisfies $\|\psi\|_\infty \leq 1$. Furthermore, it holds $TV(u)=\langle u, -\mydiv \psi\rangle_{L^2}$ \cite[Prop.~7]{bredies2016pointwise}.

\subsection{Lifted Bregman Iteration (Continuous Setting)}
\label{sss:lifted_bregman}

The lifted Bregman iteration (Alg.~2) is conceptualized in the lifted setting, i.e., applying the Bregman iteration on a lifted problem and choosing a subgradient of the lifted regularizer. Here, we will perform the lifting on the original Bregman iteration, i.e., lift \eqref{eq:breg_uk} for a given subgradient $p_{k-1}\in\partial\TV(u_{k-1})$. 

We assume that $u_{k-1}\in W^{1,1}$ and $p_{k-1}\in L^2(\Omega)$ are given, such that  $p_{k-1}\in\partial\TV(u_{k-1})$ holds. We regard the linear Bregman term as part of the data term. Using the theory of calibration-based lifting \eqref{eq:lifted_Fxt}-\eqref{eq:constraint_set_Kt}, the lifted version of \eqref{eq:breg_uk} is:
\begin{align}\label{eq:FvK}
\mathcal{F}_{\text{Breg}}(v) &= \sup_{\substack{\phi_x \in \mathcal{K}_x \\ \phi_t \in \mathcal{K}_t}} \ls \int_{\Omega \times \Gamma} \left\langle \begin{pmatrix}
\phi_x \\ \phi_t
\end{pmatrix}, Dv \right\rangle \rs , \\
\begin{split}
\label{eq:K_x}
\mathcal{K}_x &:= \{ \phi_x \in  C_0(\Omega\times\R; \R^d):  \\ &\qquad  \phi_x(x,t) \in \text{dom} \{ \eta^* \}, \quad \forall (x,t) \in \Omega\times\R \}, \end{split} \\
\begin{split} \mathcal{K}_t &:= \{ \phi_t \in  C_0(\Omega\times\R; \R ):  \\ &\qquad  - \phi_t(x,t) + tp_{k-1}(x) \leq \rho(x,t), \\ & \qquad \text{a.e. } x\in\Omega, \forall t \in \R \} \label{eq:K_t}\end{split}
\end{align}

The term $t p_{k-1}(x)$ comes from the integrand in the Bregman term,
\begin{equation}
\ll p_{k-1}, u \rr = \int_\Omega u(x) p_{k-1}(x) \dx.
\end{equation}

\subsection{Sufficient Condition for Equivalence (Continuous Case)}
\label{sss:sufficient_continuous}

The following proposition shows that the Bregman iteration (Alg.~1) and the fully continuous formulation of the lifted Bregman iteration (Alg.~2) are equivalent as long as the subgradients used in either setting fulfil a certain condition. We have to assume unique solvability of the original problem, as is the case for strictly convex functionals such as ROF. 

\begin{proposition} Assume that the minimization prob-\linebreak lems~\eqref{eq:breg_uk} in the original Bregman iteration have unique solutions $u_k$. Moreover, assume that the solutions $v_k$ in the lifted iteration \eqref{eq:breg_uk_lifted} are \emph{integral,} i.e., indicator functions of subgraphs $v_k = 1_u$ of some $u\in W^{1,1}$. If the chosen subgradients $p_{k-1}\in\partial \TV (u_{k-1})$ and $\tilde{p}_{k-1} \in \partial \mathcal{TV} (v_{k-1})$ satisfy 
\bfl 
\tilde{p}_{k-1}(\cdot,t) = p_{k-1}(\cdot), \quad \text{for a.e. } t \in \Gamma ,
\label{eq:lifted_grad_cont}
\efl
then the iterates $v_k$ of the lifted Bregman iteration are the indicator functions of the subgraphs of the iterates $u_k$ of the original Bregman iteration, i.e., $v_k = 1_{u_k}$. \label{prop:equivalency_cont}
\end{proposition}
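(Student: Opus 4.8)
The plan is to argue by induction on $k$, showing that the lifted minimization problem~\eqref{eq:breg_uk_lifted} with the prescribed subgradient $\tilde p_{k-1}$ has the same solution set, up to the lifting map $u\mapsto 1_u$, as the original problem~\eqref{eq:breg_uk} with subgradient $p_{k-1}$. The base case $k=1$ is immediate since $p_0 = 0$ and $\tilde p_0 = 0$, and for $k=0$ the condition $\tilde p_0(\cdot,t)=p_0(\cdot)$ holds trivially ($0=0$). So assume that $v_{k-1} = 1_{u_{k-1}}$ with $u_{k-1}$ the unique solution of the $(k-1)$-st original problem, and that the chosen subgradients satisfy~\eqref{eq:lifted_grad_cont}. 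I first have to check that this is consistent, i.e., that $\tilde p_{k-1}(\cdot,t) = p_{k-1}(\cdot)$ is indeed admissible as an element of $\partial\mathcal{TV}(v_{k-1})$; this uses the characterization of $\partial\mathcal{TV}$ via the reduced form $\mathcal{TV}(v)=\sup_{\|\phi_x\|_2\le 1}\int\langle\phi_x,(Dv)_x\rangle$ together with the fact that $p_{k-1} = -\mydiv\psi$ for some $\psi\in W_0^2(\mydiv;\Omega)$ with $\|\psi\|_\infty\le 1$ (from~\eqref{eq:clpsi} and the discussion following it), extended trivially in the $t$-direction.

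Next I would compare the lifted Bregman energy $\mathcal{F}_{\mathrm{Breg}}(v)$ in~\eqref{eq:FvK}--\eqref{eq:K_t} — which incorporates the linear Bregman term $t\,p_{k-1}(x)$ into the constraint set $\mathcal{K}_t$ — against the unlifted Bregman functional $u\mapsto H(u)+\TV(u)-\langle p_{k-1},u\rangle$. The key computation is that for any $u\in W^{1,1}$,
\begin{equation}
\mathcal{F}_{\mathrm{Breg}}(1_u) = H(u) + \TV(u) - \langle p_{k-1},u\rangle.
\end{equation}
This is exactly the lifting identity $F(u)=\mathcal{F}(1_u)$ of \cite{lifting_global_solutions,bouchitte2018duality}, applied to the modified data integrand $\tilde\rho(x,t):=\rho(x,t) - t\,p_{k-1}(x)$: the shift by $t\,p_{k-1}(x)$ in $\mathcal{K}_t$~\eqref{eq:K_t} is precisely the replacement of $\rho$ by $\tilde\rho$ in~\eqref{eq:constraint_set_Kt}, and $\int_\Omega \tilde\rho(x,u(x))\dx = H(u) - \langle p_{k-1},u\rangle$. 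One should note $\tilde\rho$ need no longer be non-negative, but the lifting construction and the coarea/thresholding argument only require properness and suitable lower bounds, which survive a bounded linear shift in $t$ over the compact range $\Gamma$. Hence $\inf_{v\in C}\mathcal{F}_{\mathrm{Breg}}(v) \le \mathcal{F}_{\mathrm{Breg}}(1_{u_k}) = H(u_k)+\TV(u_k)-\langle p_{k-1},u_k\rangle$, where $u_k$ is the original iterate.

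Now I would use the thresholding theorem \cite[Thm.~3.1]{lifting_global_solutions},\cite[Thm.~4.1, Prop.~4.13]{bouchitte2018duality}: any minimizer $v_k$ of the lifted problem can be thresholded without changing the energy to an integral function $1_{u_k'}$, and $\mathcal{F}_{\mathrm{Breg}}(1_{u_k'}) = H(u_k')+\TV(u_k')-\langle p_{k-1},u_k'\rangle \ge H(u_k)+\TV(u_k)-\langle p_{k-1},u_k\rangle$ by optimality of $u_k$ for the original problem. Combining the two inequalities shows $u_k'$ is also a minimizer of the original problem; by the assumed uniqueness, $u_k'=u_k$, hence $\mathcal{F}_{\mathrm{Breg}}$ attains its infimum only at $1_{u_k}$ (any other minimizer thresholds to $1_{u_k}$ and, being already assumed integral by hypothesis, equals $1_{u_k}$). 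Therefore $v_k = 1_{u_k}$, and~\eqref{eq:lifted_grad_cont} at level $k$ lets the induction proceed. The main obstacle I anticipate is the first step — justifying that the shifted functional $\mathcal{F}_{\mathrm{Breg}}$ still falls within the scope of the calibration-lifting results despite the data integrand losing non-negativity, and verifying that the prescribed $\tilde p_{k-1}$ genuinely lies in $\partial\mathcal{TV}(v_{k-1})$ rather than merely being a plausible candidate; both hinge on the $L^2$-subgradient representation~\eqref{eq:clpsi} and on checking that the $t$-independent extension of $p_{k-1}$ is compatible with the reduced lifted-$\mathcal{TV}$ duality.
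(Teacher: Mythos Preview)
Your outline has a genuine gap: you never actually use hypothesis~\eqref{eq:lifted_grad_cont} in the place where it matters. The functional $\mathcal{F}_{\text{Breg}}$ in~\eqref{eq:FvK}--\eqref{eq:K_t} is, by construction in Section~\ref{sss:lifted_bregman}, the calibration lifting of the \emph{original} Bregman step with modified data integrand $\tilde\rho(x,t)=\rho(x,t)-t\,p_{k-1}(x)$. Hence your ``key computation'' $\mathcal{F}_{\text{Breg}}(1_u)=H(u)+\TV(u)-\langle p_{k-1},u\rangle$ is simply the standard lifting identity applied to $\tilde\rho$; it holds irrespective of~\eqref{eq:lifted_grad_cont}. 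But the proposition is about Algorithm~2, whose $k$-th step minimizes the \emph{a priori different} functional $v\mapsto\mathcal{H}(v)+\mathcal{J}(v)-\langle\tilde p_{k-1},v\rangle=\mathcal{F}(v)-\langle\tilde p_{k-1},v\rangle$, with the linear term taken against the \emph{lifted} subgradient. The entire content of the proof is that these two functionals coincide, and this is exactly where~\eqref{eq:lifted_grad_cont} enters. The paper establishes the identity for all $v$ (not only integral ones) via the substitution $\tilde\phi_t:=\phi_t-t\,p_{k-1}$ in~\eqref{eq:FvK}--\eqref{eq:K_t}, obtaining $\mathcal{F}_{\text{Breg}}(v)=\mathcal{F}(v)-\int_\Gamma\langle p_{k-1},v(\cdot,t)\rangle\,dt$; condition~\eqref{eq:lifted_grad_cont} then identifies the last term with $\langle\tilde p_{k-1},v\rangle$. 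Without this step your argument says nothing about the minimizer of the Algorithm-2 functional.

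Two further remarks. First, verifying that $\tilde p_{k-1}\in\partial\mathcal{TV}(v_{k-1})$ is a \emph{hypothesis} of the proposition, not part of its proof; the paper treats existence of such subgradients separately in Section~\ref{sss:subgradients_fulilling}. Second, the thresholding theorem is superfluous: integrality of $v_k$ is already assumed, so once $\mathcal{F}_{\text{Breg}}=\mathcal{F}-\langle\tilde p_{k-1},\cdot\rangle$ is in hand, the lifting identity for the \emph{original} $\mathcal{F}$ together with uniqueness gives $v_k=1_{u_k}$ directly. A side benefit of the paper's substitution route is that your non-negativity worry for $\tilde\rho$ never arises --- everything is reduced to the standard lifted functional $\mathcal{F}$ (built from the original $\rho$) plus a separately handled linear term.
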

\begin{proof}
We first show that the Bregman iteration for the lifted energy with this specific choice of $\tilde{p}_{k-1}$ is simply the lifted Bregman energy for the subgradient $p_{k-1}$ (note that this is not necessarily the case for an arbitrary subgradient of the lifted energy).

In order to do so, we substitute $\tilde{\phi_t}(x,t) := \phi_t(x,t) - t p_{k-1}(x)$ in \eqref{eq:FvK}--\eqref{eq:K_t} and rewrite the problem as 
\begin{align}
\label{eq:Fn_Breg}
\begin{split} \F_{\text{Breg}}(v) &:= \sup_{\substack{\phi_x \in \mathcal{K}_x \\ \tilde{\phi_t} \in \tilde{\mathcal{K}}_t}} \ls \intog \ll \begin{pmatrix} \phi_x \\ \tilde{\phi}_t \end{pmatrix}, Dv \rr \right. \\ 
& \qquad \qquad \qquad \left. - \int_\Gamma \ll p_{k-1}, v(\cdot,t) \rr \dt \rs , \end{split} \\ 
\begin{split} \tilde{\mathcal{K}}_t &:= \{ \tilde{\phi}_t \in  C_0(\Omega\times\R; \R ):  \\ &\qquad  - \tilde{\phi}_t(x,t)  \leq \rho(x,t), \forall (x,t) \in \Omega\times\R \} \end{split}\end{align}
with $\mathcal{K}_x$ from \eqref{eq:K_x}. 
If $\tilde{p}_{k-1}(\cdot,t) = p_{k-1}(\cdot)$ as in the assumption, we see that the second term in \eqref{eq:Fn_Breg} is simply $-\ll \tilde{p}_{k-1}, v \rr$, i.e., $\mathcal{F}_{\text{Breg}}(v) = \mathcal{F}(v)-\ll \tilde{p}_{k-1}, v\rr$: Adding the linear Bregman term with this specific $\tilde{p}_{k-1}$ results in the same energy as lifting the original Bregman energy including the $\ll p_{k-1}, u \rr$ term.

Therefore, for any integral solution $v=\bs{1}_{u}$, the function $u$ must be a solution of the original Bregman energy. Due to the uniqueness, this means $u=u_k$. \qed
\end{proof}

\subsection{Existence of Subgradients Fulfilling the Sufficient Condition}
\label{sss:subgradients_fulilling}
One question that remains is whether subgradients $\tilde{p}_{k-1}$ as required in Prop.~\ref{prop:equivalency_cont} actually exist. 
In this section, we show that this is the case for $J=\TV$.

For fixed $p\in\partial \TV(u)\subseteq BV^\ast(\Omega)$ we define $\tilde{p}\in BV^\ast(\Omega\times\Gamma)$ by $\langle \tilde{p}, v \rangle := \int_\Gamma \ll p, v(\cdot,t) \rr dt$. If $p$ is a function, this corresponds to setting $\tilde{p}$ constant copies of $p$ along the $\Gamma$ axis, i.e., $\tilde{p}(x,t):=p(x) \, \forall t\in\Gamma$.
Similar to the previous paragraphs, if $u$ is a $W^{1,1}$ and therefore (in our setting) $L^2$ function, and using our general simplifying assumption that $p\in L^2(\Omega)$, we know that $\tilde{p}\in L^2(\Omega\times\Gamma)$ as defined due to the boundedness of $\Gamma$.

Therefore, similar to \cite[Prop.~7]{bredies2016pointwise} and \cite[Example I.4.3, Prop.~I.5.1]{book_ekeland}, $\tilde{p}$ is a subgradient of $\mathcal{TV}$ at $\bs{1}_{u}$ iff
\bfl
\mathcal{TV}(1_{u}) = \ll \tilde{p}, 1_{u} \rr - \mathcal{TV}^*(\tilde{p}).
\label{eq:to_show}
\efl
From section \ref{ss:contset}, we recall 
\begin{align}\label{eq:psixdef}
&\mathcal{TV}(v) = \sup_{z\in\Phi_x} \ls \intog \ll z, v \rr \dxt \rs = \sigma_{\Phi_x} (v)  \\
 &\Phi_x := \{ -\mydivx \phi_x |  \phi_x \in C_0(\otg; \, \R^d) ,   \| \phi_x \|_\infty \leq 1 \} 
\end{align}
and, therefore,
\begin{align}
\mathcal{TV}^*(\tilde{p}) &= \delta_{\mycl \Phi_x} (\tilde{p}),
\end{align}
where the closure is taken with respect to the $L^2(\Omega\times\Gamma)$ norm. 
Therefore, if we can show that $\tilde{p}\in \mycl \Phi_x$ and $\mathcal{TV}(1_{u}) = \ll \tilde{p}, 1_{u} \rr$, by \eqref{eq:to_show} we know that $\tilde{p}\in\partial \mathcal{TV}(\bs{1}_{u})$.

The fact that $\tilde{p} \in \mycl \Phi_x$ follows directly from $p \in \mycl \Psi$ with $\Psi$ as in \eqref{eq:Psi_TV}: For every sequence $\Psi\supset\left(p_n\right) \to p$ we have a sequence $\psi_n$ of $C_c^\infty(\Omega)$ functions with $\|\psi(x)\|_2\leq 1$ and $p_n=-\mydiv \psi_n$. Thus we can set $(\phi_x)_n(x,t):=\psi_n(x)$, so that $-\mydiv_x (\phi_x)_n\in\Phi_x$ from \eqref{eq:psixdef} and $-\mydiv_x (\phi_x)_n (x,t) = -\mydiv \psi_n(x)=p_n(x)$. Thus $-\mydiv_x (\phi_x)_n(\cdot,t)\to p$ in $L^2(\Omega)$ for all $t\in\Gamma$. Due to the boundedness of $\Gamma$, this implies $-\mydiv_x (\phi_x)_n \to \tilde{p}$ in $L^2(\Omega\times\Gamma)$, which shows $\tilde{p}\in\mycl \Phi_x$ as desired. 

In order to show the final missing piece in \eqref{eq:to_show}, i.e., $\mathcal{TV}(1_{u}) = \ll \tilde{p}, 1_{u} \rr$, note that
% \p  can define $\tilde{p}_n(x,t):=p_n(x)$ for all $t\in\Gamma$. Due to the boundedness of $\Gamma$, this implies $$\tilde{p}_n \to set $_{n\in\N}$ with ß
% \bfl
%         \lim_{n\to\infty} \| p_n \|_{W^2(\mydiv)} = \| p \|_{W^2(\mydiv)} 
% \efl
% we define $\left(\tilde{p}_n\right)_{n\in\N}$ as $\tilde{p}_n (\cdot, t) := p_n(\cdot), \, \forall t \in \Gamma$. Since $\Gamma$ is bounded and all $p_n$ are constant with respect to the second variable for all $t \in \Gamma$, it holds $\tilde{p}_n \in C_0(\otg; \R^d)$ and
% \begin{align}
%         &\lim_{n\to\infty} \| \tilde{p}_n \|_{W^2(\mydivx)} \\
%         &=  \lim_{n\to\infty} \into \int_\Gamma \left( \tilde{p}_n(x,t) \right)^2 + \left( \mydivx \tilde{p}_n(x,t) \right)^2 \dt \dx \\
%         &=  \lim_{n\to\infty} \into | \Gamma | \left( \left( p_n(x) \right)^2 + \left( \mydiv p_n(x) \right)^2 \right)  \dx \\
%         &=  \into | \Gamma | \left( \left( p(x) \right)^2 + \left( \mydiv p(x) \right)^2 \right) \dx \\
%         &=  \intog \left( \tilde{p}(x,t) \right)^2 + \left( \mydivx \tilde{p}(x,t) \right)^2 \dt \dx \\
%         &= \| \tilde{p} \|_{W^2(\mydivx)}.
% \end{align}
%With \eqref{eq:tv_equality} and the coarea formula the rest of \eqref{eq:to_show} follows:
\begin{align}
\mathcal{TV}(1_{u}) &\overset{\eqref{eq:tv_equality}}{=} \TV(u) = \ll p, u \rr\\
 & \overset{(*)}= \int_\Omega p(x) \int_\Gamma 1_{u}(x,t) \dt \dx\\
 & = \langle \tilde{p}, 1_{u}\rangle.
\end{align}
The crucial step is $(*)$, where we again used a coarea-type formula for linear terms.

Therefore, by defining $\tilde{p}$ based on $p$ as above, we have recovered a subgradient of the lifted regularizer $\mathcal{TV}$ of the form required by Prop.~\ref{prop:equivalency_cont}.

\section{Equivalence in the Half-Discretized Formulation}

In the previous section, we argued in the function space, i.e., $u\in W^{1,1}(\Omega\times\Gamma)$. While theoretically interesting, this leaves the question whether a similar equivalency between the original and lifted Bregman iterations can also be formulated after discretizing the range;  i.e., in the sublabel-accurate lifted case. 

For simplicity, the following considerations are formal due to the mostly pointwise arguments. However, they can equally be understood in the spatially discrete setting with finite $\Omega$, where arguments are more straightforward. For readability, we consider a fixed $x \in \Omega $ and omit $x$ in the arguments.

\subsection{Lifted Bregman Iteration (Discretized Case)}

Analogous to our argumentation in the continuous setting, we first perform the (sublabel-accurate) lifting on the equation from the original Bregman iteration \eqref{eq:breg_uk}, assuming that a subgradient $p_{k-1}\in\partial\TV(u_{k-1})$ is given. We show, that the extended data term $H_\text{Breg}(u) := H(u) - \ll p_{k-1}, u \rr$ has a lifted representation of the form $\bs{H}_\text{Breg}(\bs{u}) := \bs{H}(\bs{u}) - \ll p_{k-1}\bs{\tilde{\gamma}}, \bs{u} \rr$, similar to \eqref{eq:breg_uk_lifted}. Again, it is not clear whether $p_{k-1}\bs{\tilde{\gamma}}$ is a subgradient of the lifted total variation. In the following, in a slight abuse of notation,  we use pointwise arguments for fixed $x\in\Omega$, e.g. $u=u(x),$ $p=p(x)$, etc.

\begin{proposition}
Assume $\rho_1,\rho_2,h: \Gamma \mapsto \Rc$ with
\begin{align}
\rho_2(u) := \rho_1 (u) - h(u), \quad h(u) := pu, \quad p\in\R,
\end{align}
where $\rho_1$ and $\rho_2$ should be understood as two different  data terms in~\eqref{eq:problem}. Define
\begin{equation}
\tilde{\bs{\gamma}} := \begin{pmatrix} \gamma_2 - \gamma_1, & \hdots, & \gamma_{L} - \gamma_l \end{pmatrix}^{\top}\label{eq:bs_gamma}
\end{equation}
Then, for the lifted representations $\bs{\rho}_1,\bs{\rho}_2,\bs{h} : \R^l \mapsto \Rc$ in \eqref{eq:def_data_integrand}, it holds 
\begin{equation}
\bs{\rho}_2^{**}(\bs{u})  =  \bs{\rho}_1^{**}(\bs{u}) - \bs{h}^{**}(\bs{u})   =   \bs{\rho}_1^{**}(\bs{u}) - \langle p \tilde{\bs{\gamma}}, \bs{u} \rangle  .
\end{equation}
\label{prop:lifted_sum}
\end{proposition}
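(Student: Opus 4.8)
The plan is to reduce everything to a pointwise statement on $\R^l$ (the spatial variable plays no role here) and to exploit that $h$ is \emph{linear}, so that its lifted integrand is \emph{affine} on the lifted label space $\bs{\Gamma}$; the claim then follows because Legendre--Fenchel biconjugation is unaffected by adding an affine function. Two ingredients are needed: (i) an explicit ``telescoping'' identity for $\langle \tilde{\bs{\gamma}}, \bs{1}_i^\alpha\rangle$ that identifies the relevant affine function with $\langle p\tilde{\bs{\gamma}}, \cdot\rangle$; and (ii) the standard fact that $(g + a)^{**} = g^{**} + a$ for any affine $a:\R^l\to\R$, which is immediate from the definition of the conjugate (adding $a$ merely translates the conjugate, and the translation is undone under the second conjugation).

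For (i), I would note that $\bs{1}_i$ has ones exactly in coordinates $1,\dots,i$, so $\langle \tilde{\bs{\gamma}}, \bs{1}_i\rangle = \sum_{j=1}^{i}(\gamma_{j+1}-\gamma_j) = \gamma_{i+1}-\gamma_1$ telescopes; using $\bs{1}_i^\alpha = \alpha\bs{1}_i + (1-\alpha)\bs{1}_{i-1}$ and linearity in $\alpha$ this yields
\[
\langle \tilde{\bs{\gamma}}, \bs{1}_i^\alpha\rangle = \alpha(\gamma_{i+1}-\gamma_1) + (1-\alpha)(\gamma_i-\gamma_1) = \gamma_i^\alpha - \gamma_1 ,
\]
hence $h(\gamma_i^\alpha) = p\gamma_i^\alpha = \langle p\tilde{\bs{\gamma}}, \bs{1}_i^\alpha\rangle + p\gamma_1$. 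The additive constant $p\gamma_1$ disappears under the usual normalization $\gamma_1=0$ and in any case only shifts the energy by a constant, hence is irrelevant for the Bregman iteration; I would either normalize it away or carry it along. The key observation is that $h(\gamma_i^\alpha)$ depends on $(i,\alpha)$ only through the point $\bs{1}_i^\alpha$ --- this is where the (mild) non-uniqueness of the representation $\bs{1}_i^\alpha$ at breakpoints $\bs{u}=\bs{1}_j$ enters, and it is harmless since both admissible representations there give $\gamma_i^\alpha=\gamma_{j+1}$. Therefore the subtracted term factors out of the infimum defining $\bs{\rho}_2$: for every $\bs{u}\in\R^l$,
\[
\bs{\rho}_2(\bs{u}) = \inf_{i,\alpha:\ \bs{1}_i^\alpha=\bs{u}}\bigl\{\rho_1(\gamma_i^\alpha)-h(\gamma_i^\alpha)\bigr\} = \bs{\rho}_1(\bs{u}) - \langle p\tilde{\bs{\gamma}}, \bs{u}\rangle - p\gamma_1 ,
\]
where off $\bs{\Gamma}$ both sides equal $+\infty$ (empty infimum), so the identity holds on all of $\R^l$.

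Then I would biconjugate. Since $\bs{u}\mapsto\langle p\tilde{\bs{\gamma}},\bs{u}\rangle+p\gamma_1$ is affine and finite, ingredient (ii) applied to $g=\bs{\rho}_1$ gives $\bs{\rho}_2^{**} = \bs{\rho}_1^{**} - \langle p\tilde{\bs{\gamma}},\cdot\rangle - p\gamma_1$, which is the second claimed equality (up to the constant). For the first equality I would compute $\bs{h}^{**}$ directly: $\bs{h}$ equals the affine function $\langle p\tilde{\bs{\gamma}},\cdot\rangle+p\gamma_1$ on $\bs{\Gamma}$ and $+\infty$ elsewhere, and since $\operatorname{conv}\bs{\Gamma}$ is a closed polytope whose generating points lie in $\bs{\Gamma}$, the closed convex envelope of $\bs{h}$ is that same affine function restricted to $\operatorname{conv}\bs{\Gamma}$; thus $\bs{h}^{**}(\bs{u}) = \langle p\tilde{\bs{\gamma}},\bs{u}\rangle + p\gamma_1 + \delta_{\operatorname{conv}\bs{\Gamma}}(\bs{u})$. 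Because $\bs{\rho}_1^{**}$ is already $+\infty$ outside $\operatorname{conv}\bs{\Gamma}$ (its domain lies in $\operatorname{conv}(\operatorname{dom}\bs{\rho}_1)\subseteq\operatorname{conv}\bs{\Gamma}$), subtracting $\bs{h}^{**}$ coincides on $\operatorname{conv}\bs{\Gamma}$ with subtracting only its affine part, while $\bs{\rho}_1^{**}-\bs{h}^{**}$ and $\bs{\rho}_2^{**}$ are both $+\infty$ outside; hence $\bs{\rho}_1^{**}-\bs{h}^{**} = \bs{\rho}_1^{**}-\langle p\tilde{\bs{\gamma}},\cdot\rangle-p\gamma_1 = \bs{\rho}_2^{**}$.

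The main obstacle is not any single hard step but careful bookkeeping with extended-real arithmetic: I must ensure no genuine ``$\infty-\infty$'' arises when forming $\bs{\rho}_1^{**}-\bs{h}^{**}$ (it does not, since the only ambiguous region is outside $\operatorname{conv}\bs{\Gamma}$, where the target $\bs{\rho}_2^{**}$ is $+\infty$, so that value is adopted), justify that $h(\gamma_i^\alpha)$ may be pulled out of the infimum despite the non-unique label representation, and decide how to treat the additive constant $p\gamma_1$. Everything else is the routine verification of ingredients (i) and (ii).
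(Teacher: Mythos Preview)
Your argument is correct and rests on the same two ingredients as the paper's proof: the telescoping identity $\langle \tilde{\bs{\gamma}}, \bs{1}_i^\alpha\rangle = \gamma_i^\alpha - \gamma_1$ and the invariance of biconjugation under affine perturbations. The only organizational difference is that you pull the affine part out \emph{before} conjugating (showing $\bs{\rho}_2 = \bs{\rho}_1 - a$ on $\R^l$ with $a$ affine, then invoking $(g+a)^{**}=g^{**}+a$), whereas the paper computes $\bs{\rho}_2^*$ explicitly, recognizes it as $\bs{\rho}_1^*(\,\cdot + p\tilde{\bs{\gamma}}\,)$ via the same telescoping step, and undoes the shift in the second conjugation by the substitution $\bs{z}=\bs{v}+p\tilde{\bs{\gamma}}$; this is precisely the proof of your lemma carried out in-line. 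Your treatment is in fact more careful on two points the paper glosses over: the additive constant $p\gamma_1$ (the paper silently writes $p\gamma_j^\beta=\langle p\tilde{\bs{\gamma}},\bs{1}_j^\beta\rangle$), and the potential $\infty-\infty$ in $\bs{\rho}_1^{**}-\bs{h}^{**}$ outside $\operatorname{conv}\bs{\Gamma}$, which you resolve by observing that $\bs{\rho}_2^{**}$ is $+\infty$ there anyway.
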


\begin{proof}
We deduce the biconjugate of $\bs{\rho}_2$ step-by-step and show that the final expression implies the anticipated equality. According to \eqref{eq:def_data_integrand} the lifted representation of $\rho_2$ is
\begin{align}
\bs{\rho}_2(\textbf{u}) &= \inf_{\substack{i\in\{1,...,l\}, \\\alpha \in [0,1]}}  \left \{ \rho_2(\gamma_i^{\alpha}) + \delta_{\textbf{1}_i^{\alpha}}(\textbf{u}) \right \}.
\label{eq:def_data_integrand_sum}
\end{align}
We use the definition of the Fenchel conjugate and note that the supremum is attained for some  $\textbf{u}\in\boldsymbol{\Gamma}$:
\begin{align}
\bs{\rho}_2^*(\textbf{v}) &= \sup_{\textbf{u} \in \R^l} \left \{ \langle \textbf{u}, \textbf{v} \rangle - \bs{\rho}_2(\textbf{u}) \right \} \\
&= \sup_{\substack{j\in\{1,...,l\}, \\ \beta \in [0,1]}} \left \{ \langle \textbf{u}, \textbf{v} \rangle - \bs{\rho}_2(\textbf{u}) \right \}.
\end{align}
The definition of $\bs{\rho}_2$ and $\rho_2$ lead to:
\begin{align}
\bs{\rho}_2^*(\textbf{v}) 
&= \sup_{\substack{j\in\{1,...,l\}, \\ \beta \in [0,1]}} \left \{ \langle \textbf{1}_j^{\beta}, \textbf{v} \rangle -  \rho_1\left(\gamma_j^{\beta}\right) + p \gamma_j^{\beta} \right \} .
\end{align}
Using $\tilde{\bs{\gamma}}$ as in \eqref{eq:bs_gamma} we can furthermore express $p\gamma_j^{\beta} $ in terms of $\textbf{1}_j^{\beta}$:
\begin{align}
\bs{\rho}_2^*(\textbf{v}) &= \sup_{\substack{j\in\{1,...,l\},\\ \beta \in [0,1]}} \left \{ \langle \textbf{1}_j^{\beta}, \textbf{v} \rangle - \rho_1\left(\gamma_j^{\beta}\right) + \langle p \tilde{\bs{\gamma}} , \textbf{1}_j^{\beta} \rangle  \right \} \\
&= \sup_{\substack{j\in\{1,...,l\}, \\ \beta \in [0,1]}} \left \{ \langle \textbf{1}_j^{\beta}, \textbf{v} + p \tilde{\bs{\gamma}} \rangle - \rho_1\left(\gamma_j^{\beta}\right) \right \}.
\end{align}
Next we compute the biconjugate of $\bs{\rho}$:  
\begin{align}
\bs{\rho}_2^{**}(\textbf{w}) &= \sup_{\textbf{v} \in \R^l} \left \{\langle \textbf{v}, \textbf{w} \rangle - \bs{\rho}_2^*(\textbf{v}) \right \} .
\end{align}
By substituting $\textbf{z} := \textbf{v} + p \tilde{\bs{\gamma}} $ we get
\begin{align}
\bs{\rho}_2^{**}(\textbf{w}) =  &
\sup_{\textbf{z} \in \R^l} \Bigg \{ \langle \textbf{z} -p \tilde{\bs{\gamma}}, \textbf{w} \rangle  \\ &   -   \sup_{\substack{j\in\{1,...,l\}, \\ \beta \in [0,1]}} \left \{ \langle \textbf{1}_j^{\beta}, \textbf{z}  \rangle - \rho_1\left(\gamma_j^{\beta}\right) \right \} \Bigg \} \\
= & - \langle \textbf{w}, p \tilde{\bs{\gamma}} \rangle + \sup_{\textbf{z} \in \R^l} \Bigg \{ \langle \textbf{z} , \textbf{w} \rangle  \\ &  - \sup_{\substack{j\in\{1,...,l\}, \\ \beta \in [0,1]}} \left \{ \langle \textbf{1}_j^{\beta}, \textbf{z}  \rangle - \rho_1\left(\gamma_j^{\beta}\right) \right \}  \Bigg  \}  \\
= & \bs{\rho}_1^{**}(\textbf{w}) - \langle \textbf{w}, p \tilde{\bs{\gamma}} \rangle .
\end{align}
In reference to \cite[Prop.~2]{sublabel_cvpr} we see, that the expression $ \langle \textbf{w}, p \tilde{\bs{\gamma}} \rangle $ is in fact $\textbf{h}^{**}(\textbf{w})$. This concludes the proof of Thm.~\ref{prop:lifted_sum}. \qed
\end{proof}

\subsection{Sufficient Condition for Equivalence (Discretized Setting)}

In the previous section, we performed the lifting on~\eqref{eq:breg_uk} of the original Bregman iteration for a fixed $p_{k-1}$. In this section, we show that -- under a sufficient condition on the chosen subgradients -- we can equivalently perform the Bregman iteration on the lifted problem where a subgradient $\bs{p}_{k-1}$ is chosen in the lifted setting (Alg.~2). This is the semi-discretized version of Prop.~{\ref{prop:equivalency_cont}}: 

\begin{proposition} Assume that the minimization problems \eqref{eq:breg_uk} in the original Bregman iteration have unique solutions. Moreover, assume that in the lifted iteration, the solutions $\bs{u}_k$ of \eqref{eq:breg_uk_lifted} in each step satisfy $\bs{u}_k(x)\in\bs{\Gamma}$, i.e., are sublabel-integral. If at every point $x$ the chosen subgradients $p_{k-1}\in\partial J(u_{k-1})$ and $\tilde{\bs{p}}_{k-1} \in\partial \bs{J}(\bs{u}_{k-1})$ satisfy 
\bfl 
\tilde{\bs{p}}_{k-1}(x) \quad = 
\quad p_{k-1}(x) \tilde{\bs{\gamma}} 
\label{eq:lifted_grad}
\efl
with $\tilde{\bs{\gamma}}$ as in \eqref{eq:bs_gamma}, then the lifted iterates $\bs{u}_k$ correspond to the iterates $u_k$ of the classical Bregman iteration \eqref{eq:breg_uk} according to \eqref{eq:projection}.
\label{prop:equivalency}
\end{proposition}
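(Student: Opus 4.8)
The plan is to mirror the proof of Proposition~\ref{prop:equivalency_cont}, with Proposition~\ref{prop:lifted_sum} playing the role that the substitution $\tilde{\phi}_t = \phi_t - t\,p_{k-1}$ played in the continuous calibration setting. First I would fix $x\in\Omega$ and apply Proposition~\ref{prop:lifted_sum} with $\rho_1 := \rho(x,\cdot)$ and $h(u) := p_{k-1}(x)\,u$, so that $\rho_2 := \rho_1 - h$ is the integrand of the Bregman-modified data term $H_{\text{Breg}} := H - \ll p_{k-1},\cdot\rr$. The proposition yields $\bs{\rho}_2^{**}(\bs{u}) = \bs{\rho}_1^{**}(\bs{u}) - \ll p_{k-1}(x)\,\tilde{\bs{\gamma}}, \bs{u}\rr$ pointwise; integrating over $\Omega$ shows that the sublabel-accurate lifting of $H_{\text{Breg}}$ is $\bs{H}_{\text{Breg}}(\bs{u}) = \bs{H}(\bs{u}) - \ll p_{k-1}\,\tilde{\bs{\gamma}}, \bs{u}\rr$. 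Since the $\bs{TV}$ term is unaffected by the linear Bregman term, the lifted version of the $k$-th original Bregman step~\eqref{eq:breg_uk} for the given $p_{k-1}$ is the minimization of $\bs{u}\mapsto \bs{H}(\bs{u}) + \bs{TV}(\bs{u}) - \ll p_{k-1}\,\tilde{\bs{\gamma}}, \bs{u}\rr$.

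Next I would use the hypothesis~\eqref{eq:lifted_grad}, $\tilde{\bs{p}}_{k-1}(x) = p_{k-1}(x)\,\tilde{\bs{\gamma}}$, to identify this objective with the one minimized in the lifted Bregman step~\eqref{eq:breg_uk_lifted}: adding the linear Bregman term inside the lifted problem with this particular subgradient gives the same energy as first lifting the Bregman-modified original energy. Hence $\bs{u}_k$ minimizes the lifted counterpart of $H_{\text{Breg}} + \TV$. I would then invoke the exactness of the sublabel-accurate lifting on sublabel-integral functions (\cite{sublabel_cvpr}): for sublabel-integral $\bs{u}$ with projection $u$ via~\eqref{eq:projection} the lifted and the original energy coincide, and every admissible competitor of~\eqref{eq:breg_uk} lifts to an admissible competitor of~\eqref{eq:breg_uk_lifted}. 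Comparing $\bs{u}_k$ with such lifted competitors shows that its projection $u$ is a minimizer of $H_{\text{Breg}}+\TV$, i.e.\ a solution of~\eqref{eq:breg_uk}; by the assumed uniqueness of that solution, $u = u_k$, so $\bs{u}_k$ corresponds to $u_k$ in the sense of~\eqref{eq:projection}.

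This is closed by induction on $k$. For $k=1$ one has $\bs{p}_0 = 0 = p_0\,\tilde{\bs{\gamma}}$ and the argument applies directly. For the inductive step, the correspondence $\bs{u}_{k-1}\leftrightarrow u_{k-1}$ obtained at the previous step is what makes the subgradient condition~\eqref{eq:lifted_grad} meaningful (it links $p_{k-1}\in\partial\TV(u_{k-1})$ with a subgradient of $\bs{TV}$ at $\bs{u}_{k-1}$), after which re-running the two steps above gives $\bs{u}_k\leftrightarrow u_k$.

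I expect the main obstacle to be this exactness step: while $\bs{TV}(\bs{1}_u) = \TV(u)$ is the discrete analogue of~\eqref{eq:tv_equality} and is available, the companion statement for the data term, namely $\bs{\rho}^{**}(x,\bs{1}_i^{\alpha}) = \rho(x,\gamma_i^{\alpha})$ on sublabel-integral arguments, is precisely where the sublabel-accurate convexification may a priori fail to be tight and must be argued carefully (or restricted to a suitable class of data terms) — which is also why the whole discussion is kept at the formal, pointwise level. A secondary issue, deferred to the following subsection in analogy with Section~\ref{sss:subgradients_fulilling}, is the existence of subgradients $\tilde{\bs{p}}_{k-1}\in\partial\bs{TV}(\bs{u}_{k-1})$ satisfying~\eqref{eq:lifted_grad}; for the present statement it is simply assumed.
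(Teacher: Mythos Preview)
Your proposal is correct and follows essentially the same route as the paper: apply Proposition~\ref{prop:lifted_sum} pointwise to identify the sublabel-accurate lifting of $H_{\text{Breg}}=H-\langle p_{k-1},\cdot\rangle$ with $\bs{H}-\langle p_{k-1}\tilde{\bs{\gamma}},\cdot\rangle$, use hypothesis~\eqref{eq:lifted_grad} to match this with the lifted Bregman step~\eqref{eq:breg_uk_lifted}, and conclude via sublabel-integrality and uniqueness. Your explicit induction on $k$ and your discussion of the exactness step (agreement of lifted and original energies on sublabel-integral competitors) are more careful than the paper's own treatment, which simply asserts that a sublabel-integral minimizer ``can be associated via~\eqref{eq:projection} with the solution of the original problem''; but the underlying argument is the same.
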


\begin{proof}[Proof of Proposition \ref{prop:equivalency}]
We define the \emph{extended data term}
\begin{equation}
H_{\text{Breg}}(u) := \into \rho(x,u(x)) - p(x) u(x) \dx,
\end{equation}
which incorporates the linear term of the Bregman iteration.
Using Prop.~\ref{prop:lifted_sum}, we reach the following lifted representation:
\begin{align}
\bs{H}_{\text{Breg}}(\bs{u}) &= \into \bs{\rho}^{**}(x,\bs{u}(x)) - \langle p(x)\tilde{\bs{\gamma}}, \bs{u}(x) \rangle \dx .
\end{align}
Hence the lifted version of \eqref{eq:breg_uk} is
\begin{equation}
\arg\min_{\bs{u}\in\bs{U}} \left \{ \bs{H}(\bs{u}) +\bs{J}(\bs{u}) - \langle p_{k-1} \tilde{\bs{\gamma}}, \bs{u} \rangle  \right \}.
\end{equation}
Comparing this to  \eqref{eq:breg_uk_lifted} shows that the minimization problem in the lifted iteration is the lifted version of \eqref{eq:breg_uk} if the subgradients  $p_{k-1}\in\partial J(u_{k-1})$ and $\tilde{\bs{p}}_{k-1} \in\partial \bs{J}(\bs{u}_{k-1})$ satisfy $ \tilde{\bs{p}}_{k-1} = p_{k-1} \tilde{\bs{\gamma}}$.
 In this case, since we have assumed that the solution of the lifted problem \eqref{eq:breg_uk_lifted} is sublabel-integral, it can be associated via \eqref{eq:projection} with the solution of the original problem \eqref{eq:breg_uk}, which is unique by assumption.\qed
\end{proof}
Thus, under the condition of the proposition, the lifted and unlifted Bregman iterations are equivalent.

\newcommand{\circimg}[1]{\includegraphics[width=.16\linewidth]{#1}}
\begin{figure*}[!htb]
  \begin{center}
  \begin{tabular}{lllll}
    \circimg{./Images/numerical_results/rof_bregman/anisotropic/L2/rof1}&
    \circimg{./Images/numerical_results/rof_bregman/anisotropic/L2/rof2}&
    \circimg{./Images/numerical_results/rof_bregman/anisotropic/L2/rof3}&
    \circimg{./Images/numerical_results/rof_bregman/anisotropic/L2/rof4}&
    \circimg{./Images/numerical_results/rof_bregman/anisotropic/L2/rof50}\\
    \circimg{./Images/numerical_results/rof_bregman/anisotropic/L5/rof1}&
    \circimg{./Images/numerical_results/rof_bregman/anisotropic/L5/rof2}&
    \circimg{./Images/numerical_results/rof_bregman/anisotropic/L5/rof3}&
    \circimg{./Images/numerical_results/rof_bregman/anisotropic/L5/rof4}&
    \circimg{./Images/numerical_results/rof_bregman/anisotropic/L5/rof50}\\
    \circimg{./Images/numerical_results/rof_bregman/anisotropic/L5p/rof1}&
    \circimg{./Images/numerical_results/rof_bregman/anisotropic/L5p/rof2}&
    \circimg{./Images/numerical_results/rof_bregman/anisotropic/L5p/rof3}&
    \circimg{./Images/numerical_results/rof_bregman/anisotropic/L5p/rof4}&
    \includegraphics[width=.24\linewidth]{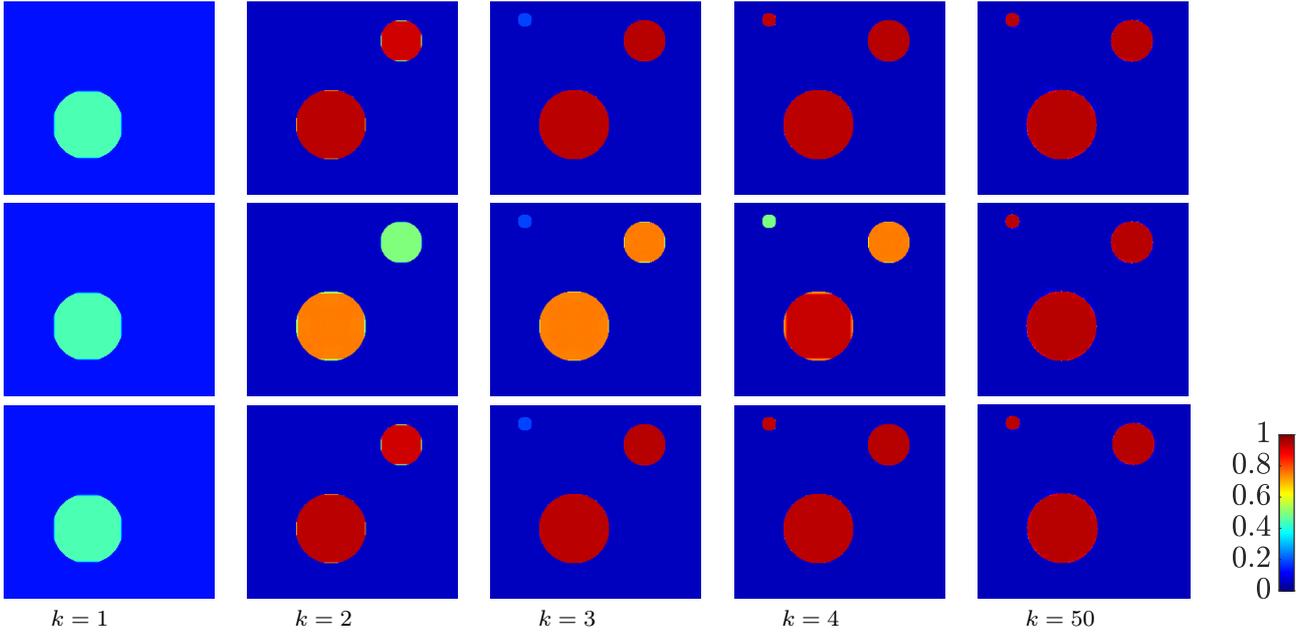}\\
    $\qquad k=1$&
    $\qquad k=2$&
    $\qquad k=3$&
    $\qquad k=4$&
    $\qquad k=50$
  \end{tabular}%
  \end{center}%
\vspace{-1em}\caption{\textbf{Equivalence of classical and lifted Bregman on a convex problem.} On the \emph{convex} ROF-\eqref{eq:problem} problem with anisotropic TV, a plain implementation of the classical Bregman iteration as in Alg.~1 \textbf{(top row)} and a na\"ive implementation of the lifted generalization as in Alg.~2 with lifted subgradients taken unmodified from the previous iteration \textbf{(middle row)} show clear differences. If the lifted subgradients are first transformed as in Prop.~\ref{prop:equivalency}, the lifted iterates \textbf{(bottom row)} are visually indistinguishable from the classical iteration in this fully convex case. In addition, the lifted version also allows to transparently handle non-convex energies (Fig.~\ref{fig:results2}).
%verison  two approaches Comparison of Alg.~1 and Alg.~2. Top row shows the result of the original Bregman iteration, the two bottom rows the results of the lifted Bregman iteration. In the third row we use transformed subgradients. In the latter case  and Alg.~2 are equivalent and return the same results up to negligible numerical differences; i.e. the 2-norm of the difference is of the order 1.0e-02 -- 1.0e-03
}
\label{fig:rof}
\end{figure*}

\section{Fully-Discretized Setting}
\label{sec:numerical_discussion}

In this section, we consider the spatially discretized problem on a finite discretized domain~$\Omega^h$ with grid spacing $h$. 
In particular, we will see that the subgradient condition in Prop.~\ref{prop:equivalency} can be met in case of anisotropic TV and how such subgradients can be obtained in practice.

\subsection{Finding a Subgradient}

The discretized, sublabel-accurate relaxed total variation is  of the form
\begin{flalign}
        &\bs{J}^h(\nabla \bs{u}^h) = \max_{\bs{q}^h:\Omega^h \rightarrow\R^{k\times d}} Z, \\
        &Z := \left \{ \sum_{x \in \Omega^h}  \langle \bs{q}^h(x), \nabla \bs{u}^h(x) \rangle - \delta_\mathcal{K}(\bs{q}^h(x)) \right \},
\label{eq:q}
\end{flalign}
with $\bs{K}$ defined by \eqref{eq:K_iso} or \eqref{eq:K_an1}-\eqref{eq:K_an2} and $\nabla$ denoting the discretized forward-difference operator. By standard convex analysis (\cite[Thm.~23.9]{book_rock_convex}, \cite[Cor.~10.9]{book_rock_variational}, \cite[Prop.~11.3]{book_rock_variational}) we can show that if $\bs{q}^h$ is a  maximizer of \eqref{eq:q}, then $\bs{p}^h := \nabla^{\top}\bs{q}^h$ is a subgradient of $J^h(\nabla \bs{u}^h)$. Thus, the step of choosing a subgradient \eqref{eq:breg_qk_lifted} boils down to $\bs{p}_k^h = \nabla^\top \bs{q}_k^h$ and for the dual maximizer $\bs{q}_{k-1}^h$ of the last iteration we implement  \eqref{eq:breg_uk_lifted} as:
\begin{flalign}
&\bs{u}^h_k = \arg\min_{\bs{u}^h:\Omega^h \mapsto \R^l} \max_{\bs{q}^h_k:\Omega^h \mapsto\mathcal{K}} z, \\
&z := \sum_{x\in\Omega^h} (\bs{\rho}^h)^{**}(x,\bs{u}^h(x)) + \langle \bs{q}^h_k - \bs{q}^h_{k-1}, \nabla \bs{u}^h \rangle .
\end{flalign}

\subsection{Transformation of Subgradients}
In Prop.~\ref{prop:equivalency} we formulated a constraint on the subgradients for which the original and lifted Bregman iteration are equivalent. While this property is not necessarily satisfied if the subgradient $\bs{p}^h_{k-1}$ is chosen according to the previous paragraph, we will now show that any such subgradient can be transformed into another valid subgradient that satisfies condition \eqref{eq:lifted_grad},  in analogy to the construction of the subgradient $\bar{p}$ in Sect.~\ref{sss:subgradients_fulilling}.

Consider a pointwise sublabel-integral solution $\bs{u}^h_k$ with subgradient $\bs{p}^h_k := \nabla^\top \bs{q}^h_k \in \partial \bs{J}^h(\bs{u}^h_k)$ for $\bs{q}^h_k(\cdot) \in \bs{K}$ being a maximizer of \eqref{eq:q}. We define a pointwise transformation: For fixed $x^m \in \Omega^h$ and $\bs{u}^h_k(x^m) = \bs{1_i^\alpha}$, let $(\bs{q}^h_k(x^m))^i \in \R^d$ denote the $i$-th row of $\bs{q}^h_k(x^m)$ corresponding to the $i$-th label as prescribed by $\bs{u}^h_k(x^m) = \bs{1}_i^\alpha$. Both in the isotropic and anisotropic case the transformation
\bfl 
        \tilde{\bs{q}}^h_k(x^m) := \frac{(\bs{q}^h_k(x^m))^i}{\gamma_{i+1} - \gamma_i} \tilde{\bs{\gamma}}
        \label{eq:gradient_transform}
\efl
returns an element of the set $\bs{K}$, i.e., $\bs{K}_{\text{iso}}$ or $\bs{K}_{\text{an}}$. In the anisotropic case we can furthermore show that $\tilde{\bs{q}}^h_k$ also maximizes \eqref{eq:q} and therefore the transformation gives a subgradient $\tilde{\bs{p}}^h_k := \nabla^\top \tilde{ \bs{q}}^h_k \in \partial \bs{J}^h(\bs{u}^h_k)$ of the desired form \eqref{eq:lifted_grad}. The restriction to the anisotropic case is unfortunate but necessary due to the fact that the coarea formula does not hold in the discretized case for the usual isotropic discretizations.

\begin{proposition} 
Consider the anisotropic TV-regularized case \eqref{eq:K_an1}-\eqref{eq:K_an2}. Assume that the  iterate $\bs{u}^h_{k}$ is sublabel-integral. Moreover, assume that $\bs{p}^h_k := \nabla^\top \bs{q}^h_k$ is a subgradient in $\partial \bs{J}^h(\bs{u}^h_{k})$ and define $\tilde{\bs{q}}^h_k$ pointwise as in \eqref{eq:gradient_transform}. Then $\tilde{\bs{p}}^h_k := \nabla^\top \tilde{ \bs{q}}^h_k$ is also a subgradient and furthermore of the form
\bfl 
\tilde{\bs{p}}^h_k \quad = \quad p^h_{k} \tilde{\bs{\gamma}}^h,\label{eq:ptildapkm1}
\efl
where $p^h_{k}$ is a subgradient in the unlifted case, i.e., $p^h_{k}\in\partial J^h(u^h_{k})$.
\label{prop:gradient_an}
\end{proposition}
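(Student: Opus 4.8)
\noindent The plan is to exhibit $\tilde{\bs{q}}^h_k$ as a \emph{maximizer} of \eqref{eq:q}: by the fact recalled above -- that if $\bs{q}^h$ maximizes \eqref{eq:q} then $\nabla^\top\bs{q}^h\in\partial\bs{J}^h(\bs{u}^h)$ -- this at once gives $\tilde{\bs{p}}^h_k=\nabla^\top\tilde{\bs{q}}^h_k\in\partial\bs{J}^h(\bs{u}^h_{k})$, and the form \eqref{eq:ptildapkm1} then reads off from the product structure of $\tilde{\bs{q}}^h_k$.

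First I would record the structural consequence of sublabel-integrality. Writing $\bs{u}^h_k(x)=\bs{1}_{i(x)}^{\alpha(x)}$ and $s_j(x):=\sgn\!\big(u^h_k(x+he_j)-u^h_k(x)\big)$, a direct inspection of the coordinates of $\bs{1}_i^{\alpha}$ shows that for every $x$ and every spatial direction $j$ all \emph{nonzero} entries of the $j$-th column of $\nabla\bs{u}^h_k(x)$ -- i.e. $(\nabla\bs{u}^h_k(x))^m_j$ as $m$ ranges over $1,\dots,l$ -- carry the single common sign $s_j(x)$. This is the discrete, row-wise analogue of the coarea/layer-cake structure, and it is precisely what the $\ell_\infty$-ball constraint in \eqref{eq:K_an1} can exploit; the isotropic $\|\cdot\|_2$ constraint does not decouple across directions in this way, which is the real reason for restricting to $\bs{K}_{\text{an}}$.

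Next I would use optimality of $\bs{q}^h_k$. Since both the objective in \eqref{eq:q} and $\bs{K}_{\text{an}}$ decouple over points $x$ and over rows $m$, any maximizer satisfies $(\bs{q}^h_k(x))^m_j=s_j(x)\,(\gamma_{m+1}-\gamma_m)$ whenever $(\nabla\bs{u}^h_k(x))^m_j\neq0$ and is otherwise free in $[-(\gamma_{m+1}-\gamma_m),\gamma_{m+1}-\gamma_m]$. Hence (away from the degenerate points discussed below) the normalized active row $q^h_k(x):=(\bs{q}^h_k(x))^{i(x)}/(\gamma_{i(x)+1}-\gamma_{i(x)})$ has $\|q^h_k(x)\|_\infty\le1$ and equals the sign vector $(s_j(x))_{j}$ in every direction in which $u^h_k$ varies; by construction row $m$ of $\tilde{\bs{q}}^h_k(x)$ is exactly $(\gamma_{m+1}-\gamma_m)\,q^h_k(x)$. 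Feasibility $\tilde{\bs{q}}^h_k(\cdot)\in\bs{K}_{\text{an}}$ is then immediate, and, using the sign fact, $\langle\tilde{\bs{q}}^h_k(x),\nabla\bs{u}^h_k(x)\rangle =\sum_m(\gamma_{m+1}-\gamma_m)\langle q^h_k(x),(\nabla\bs{u}^h_k(x))^m\rangle =\sum_m(\gamma_{m+1}-\gamma_m)\,\|(\nabla\bs{u}^h_k(x))^m\|_1 =\bs{\phi}^{**}(\nabla\bs{u}^h_k(x))$. Summing over $x\in\Omega^h$ shows $\tilde{\bs{q}}^h_k$ attains the supremum in \eqref{eq:q}, so it is a maximizer and $\tilde{\bs{p}}^h_k\in\partial\bs{J}^h(\bs{u}^h_{k})$.

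For \eqref{eq:ptildapkm1}: since every row of $\tilde{\bs{q}}^h_k$ is an ($x$-independent) scalar multiple of the \emph{single} vector field $q^h_k$, the adjoint forward-difference operator $\nabla^\top$ -- which acts only in the spatial variable -- factors through, giving $\tilde{\bs{p}}^h_k=(\nabla^\top q^h_k)\,\tilde{\bs{\gamma}}^h=p^h_k\,\tilde{\bs{\gamma}}^h$ with $p^h_k:=\nabla^\top q^h_k$; and because $q^h_k(x)_j=\sgn(\nabla_j u^h_k(x))$ wherever $\nabla_j u^h_k(x)\neq0$ and $|q^h_k(x)_j|\le1$ otherwise, $q^h_k$ is a maximizer of the dual of the unlifted anisotropic total variation $J^h(u^h_k)=\sum_x\|\nabla u^h_k(x)\|_1$, hence $p^h_k\in\partial J^h(u^h_k)$. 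I expect the main obstacle to be exactly the degenerate points where $u^h_k(x)$ coincides with a label $\gamma_{i+1}$ (so $\alpha(x)\in\{0,1\}$ and the active-row index is ambiguous): there a single row of $\bs{q}^h_k$ generally cannot encode the correct sign towards both a larger and a smaller neighbour at once, so one must read \eqref{eq:gradient_transform} as producing the componentwise sign vector of the forward differences of $u^h_k$ at such points (equivalently, first replace $\bs{q}^h_k$ by a suitably saturated maximizer). A similarly routine but explicit adjustment handles the pointwise arguments at boundary points of $\Omega^h$ where the difference stencil is truncated.
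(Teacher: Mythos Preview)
Your proposal is correct and follows essentially the same route as the paper's proof: identify the structure of dual maximizers in \eqref{eq:q} induced by the sublabel-integral form of $\bs{u}^h_k$, verify that the transformation \eqref{eq:gradient_transform} produces another maximizer, and then read off the unlifted subgradient. The paper reduces to $d=1$ via the anisotropic decoupling and does an explicit three-way case analysis on the relative order of $(i,\alpha)$ and $(j,\beta)$ at neighbouring grid points, whereas you keep $d$ general and argue via the common-sign property of the columns of $\nabla\bs{u}^h_k$; these are the same argument at two levels of abstraction. Your treatment is in one respect more careful: you flag the degenerate case $\alpha(x)\in\{0,1\}$, where the ``active row'' index is ambiguous and the corresponding entry of $\bs{q}^h_k$ need not be saturated, and propose the natural fix (pass first to a saturated maximizer). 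The paper's display of maximizers tacitly assumes $\alpha\in(0,1)$ at this point, so your observation is a genuine refinement rather than a deviation.
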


\begin{proof}[Proof of Proposition \ref{prop:gradient_an}]
 The proof consists of two parts. First, we show that the transformation \eqref{eq:gradient_transform}  of any subgradient $\bs{p}_k^h$ in the lifted setting leads to another valid subgradient $\tilde{\bs{p}}_k^h$ in the lifted setting of the form \eqref{eq:ptildapkm1}. Second, we show that the prefactor  $p^h_{k}$ in \eqref{eq:ptildapkm1} is a valid subgradient in the unlifted setting.

In the anisotropic case the spatial dimensions are uncoupled, therefore w.l.o.g. assume $d=1$. Consider two neighboring points $x^m$ and $x^{m+1}$ with $\bs{u}^h_k(x^m) = \bs{1}_i^\alpha$ and $\bs{u}^h_k(x^{m+1}) = \bs{1}_j^\beta$. Applying the forward difference operator, we have $h \nabla \bs{u}^h_k(x^m)=$ 
\begin{flalign}
\begin{cases}
(\bs{0}_{i-1}, \, 1-\alpha, \, \bs{1}_{j-i-2}, \, \beta, \, \bs{0}_{l-j})^\top, & \quad  i < j, \\
(\bs{0}_{i-1}, \, \beta - \alpha, \, \bs{0}_{l-i})^\top, & \quad i = j, \\
(\bs{0}_{j-1}, \, \beta -1, \, -\bs{1}_{i-j-2}, \, -\alpha, \, \bs{0}_{l-j})^\top, & \quad  i > j.
\end{cases}
\end{flalign} 
Maximizers $\bs{q}^h_k(x^m) \in \bs{K}_{\text{an}}$ of the dual problem \eqref{eq:q} are exactly all vectors  of the form $\bs{q}^h_k(x^m)=$
\begin{flalign}
\begin{cases}
(**, \, \gamma_{i+1}-\gamma_i, \, ... , \, \gamma_{j+1} - \gamma_j, \, **)^\top, & \quad i < j, \\
(**, \, \sgn(\beta - \alpha) (\gamma_{i+1}-\gamma_i), \, **)^\top, & \quad i = j, \\
(**, \, \gamma_{j} - \gamma_{j+1} , \, ... , \, \gamma_i - \gamma_{i+1}, \, **)^\top, & \quad i > j. 
\label{eq:Kan_minimizers}
\end{cases}
\end{flalign}
The elements marked with $*$ can be chosen arbitrarily as long as $\bs{q}^h_k(x^m)\in\bs{K}_{\text{an}}$. Due to this special form, the transformation \eqref{eq:gradient_transform} leads to $ \tilde{\bs{q}}^h_k(x^m) = \pm \tilde{\bs{\gamma}}$  depending on the case. Crucially, this transformed vector is another equally valid choice in~\eqref{eq:Kan_minimizers} and therefore \eqref{eq:gradient_transform} returns another valid subgradient $\tilde{\bs{p}}^h_k = \nabla^\top \tilde{\bs{q}}^h_k$.

In order to show that $p_k^h= \nabla^\top q_k^h$ for $q_k^h(\cdot)= \pm 1$ is a subgradient in the unlifted setting we use the same arguments. To this end, we use  the sublabel-accurate notation with $L=2$. The ``lifted'' label space is  $\bs{\Gamma} = [0,1]$, independently of the actual $\Gamma \subset \R$; see \cite[Prop.~3]{sublabel_cvpr}. Then with $u^h_k(x^m) = \gamma_i^\alpha$ and $u^h_k(x^{m+1}) = \gamma_j^\beta$ (corresponding to $\bs{1}_i^\alpha$ and $\bs{1}_j^\beta$ from before), applying the forward difference operator $\nabla u^h_k(x^m) = \frac{1}{h} ( \gamma_j^\beta - \gamma_i^\alpha )$ shows that dual maximizers are $q^h_k(x^m) = \sgn(\gamma_j^\beta - \gamma_i^\alpha) | \bs{\Gamma} | = \pm 1$. It can be seen that the algebraic signs coincide pointwise in the lifted and unlifted setting.  Thus $p^h_{k}$ in \eqref{eq:ptildapkm1} is of the form $p^h_k = \nabla^\top q_k^h$ and in particular a subgradient in the unlifted setting. \qed
\end{proof}

\section{Numerical Results}
\label{sec:numerical_results}

In this section, we investigate the equivalence of the original and lifted Bregman iteration for~the ROF-\eqref{eq:problem} problem numerically. Furthermore, we present a stereo-matching example which supports our conjecture that the lifted Bregman iteration for variational models with arbitrary data terms can be used to decompose solutions into eigenfunctions of the regularizer.  

\subsection{Convex Energy with Synthetic Data}
We compare the results of the original and lifted Bregman iteration for the ROF-\eqref{eq:problem} problem with $\lambda =20$, synthetic input data and anisotropic TV regularizer. In the lifted setting, we compare implementations with and without transforming the subgradients as in~\eqref{eq:gradient_transform}. The results shown in Fig.~\ref{fig:rof} clearly support the theory: Once subgradients are transformed as in Prop.~\ref{prop:equivalency}, the iterates agree with the classical, unlifted iteration.

A subtle issue concerns points where the minimizer of the lifted energy is not sublabel-integral, i.e., cannot be easily identified with a solution of the original problem. This impedes the recovery of a suitable subgradient as in \eqref{eq:projection}, which leads to diverging Bregman iterations. We found this issue to occur in particular with isotropic TV discretization, which does not satisfy a discrete version of the coarea formula -- which is used to prove in the continuous setting that solutions of the original problem can be recovered by thresholding -- but is also visible to a smaller extent around the boundaries of the objects in Fig.~\ref{fig:rof}, especially when subgradients are not modified.

\subsection{Non-Convex Stereo Matching with Artificial Data} 

\renewcommand{\circimg}[1]{\includegraphics[width=.61\linewidth]{#1}}
\newcommand{\circimgs}[1]{\includegraphics[width=.30\linewidth]{#1}}

\begin{figure}[t]
  \begin{center}
  \begin{tabular}{l l}
    \circimg{./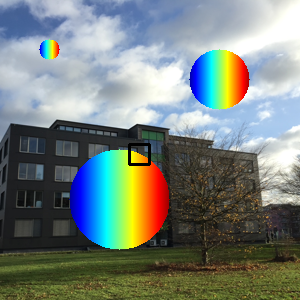}&
    \shortstack{\circimgs{./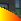}  \\
    \circimgs{./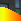}}
  \end{tabular}%
  \end{center}%
\vspace{-1em}\caption{ \textbf{Artificial data for stereo matching.} The backgrounds of the input images $I_1$ and $I_2$ are chosen to be identical. Only within the three circles $I_1$ and $I_2$ differ. The information within the circles is shifted four pixels sideways; the circles themselves stay in place. The black square marks the area of the close-ups of $I_1$ \textbf{(top)} and $I_2$ \textbf{(bottom)}. }
\label{fig:rainbow}
\end{figure}

In the following two toy examples, we empirically investigate how properties of the Bregman iteration carry over to the lifted Bregman iteration for \emph{arbitrary} (non-convex) data terms. We consider a relatively simple stereo-matching problem, namely TV-\eqref{eq:problem} with the non-convex data term
\bfl
        \rho(x,u(x)) = h_\tau\left( | I_1(x_1, x_2 + u(x)) - I_2(x) |\right) .
        \label{eq:stereo_matching_data_term_simple} 
\efl
Here, $I_1$ and $I_2$ are two given input images and $h_\tau(\alpha) := \min \ls \tau, \alpha \rs$ is a threshholding function. We assume that the input images are rectified, i.e., the epipolar lines in the images align, so that the unknown -- but desired -- displacement of points between the two images is restricted to the $x_2$ axis and can be modeled as a scalar function $u$.

A typical observation when using nonlinear scale space method is that components in the solution corresponding to non-linear eigenfunctions of the regularizer appear at certains points in time depending on their eigenvalue.

We thus construct $I_1$ and $I_2$ such that the solution $\tilde{u}$ of $\arg\min_u \rho(x,u(x))$ is clearly the sum of eigenfunctions of the isotropic [anisotropic] $\TV$, i.e., multiples of indicator functions of circles [squares].

 In the following we elaborate the isotropic setting. For some non-overlapping circles $B_{r_i}(m_i)$ with centers $m_i$ and radii $r_i$ we would like the solution
\bfl
        \tilde{u}(x) &= \sum_i \mathbbm{1}_{B_{r_i}(m_i)}(x) = \begin{cases}
                1, &\text{within } B_{r_i}(m_i), \\
                0, &\text{elsewhere}.
        \end{cases}
\efl
Fig.~\ref{fig:rainbow} shows the corresponding data; note that there is no displacement except inside the circles, where it is non-zero but constant.

In analogy to the convex ROF example in Fig.~\ref{fig:rof} and the theory of inverse scale space flow, we would expect the following property to hold for the lifted Bregman iteration:
The solutions -- here the depth maps of the artificial scene -- returned in each iteration of the lifted Bregman iteration progressively incorporate the discs (eigenfunctions of isotropic TV) according to their radius (associated eigenvalue); the biggest disc should appear first, the smallest disc last.

Encouragingly, these expectations are also observed in this non-convex case, see Fig.~\ref{fig:rainbow_results}. This suggests that the lifted Bregman iteration could be useful to decompose the solution of a variational problem with arbitrary data term with respect to eigenfunctions of the regularizer.

\newcommand{\circimgrb}[1]{\includegraphics[width=.23\linewidth]{#1}}
\newcommand{\circimgrbsmall}[1]{\includegraphics[width=.15\linewidth]{#1}}
\begin{figure*}
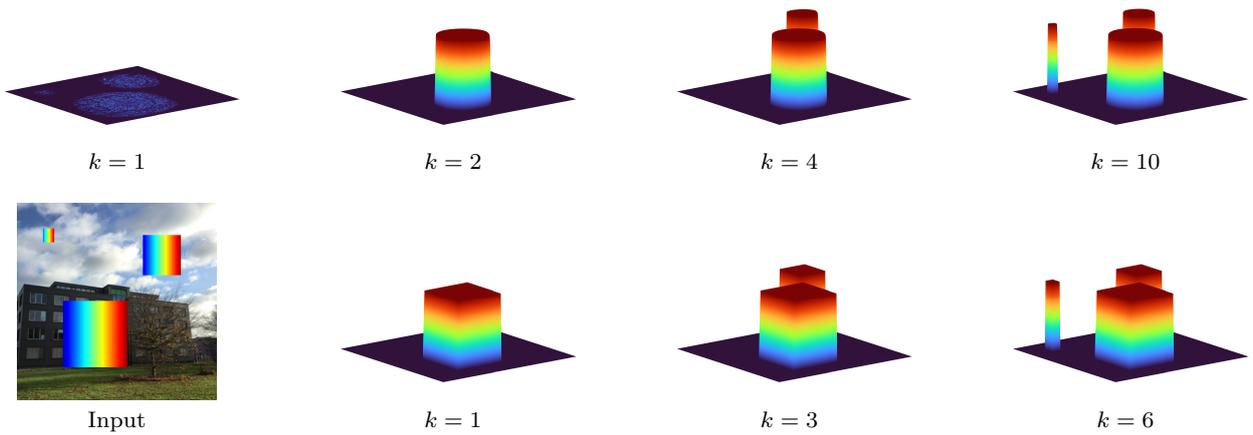

  \begin{center}
  \begin{tabular}{cccc}
    \circimgrb{./Images/numerical_results/rainbow/rainbow_k1}&
    \circimgrb{./Images/numerical_results/rainbow/rainbow_k2}&
    \circimgrb{./Images/numerical_results/rainbow/rainbow_k3}&
    \circimgrb{./Images/numerical_results/rainbow/rainbow_k10}
      \\
    $k=1$ & $k=2$ & $ k=4$ & $ k=10$  \\
    \\
    \circimgrbsmall{./Images/numerical_results/rainbow/ev_sq}&
    \circimgrb{./Images/numerical_results/rainbow/rainbow_sq_k1}&
    \circimgrb{./Images/numerical_results/rainbow/rainbow_sq_k3}&
    \circimgrb{./Images/numerical_results/rainbow/rainbow_sq_k6}
      \\
    Input & $k=1$ & $ k=3$ & $ k=6$ 
  \end{tabular}%
  \end{center}%
\vspace{-1em}
\caption{ \textbf{Lifted Bregman on stereo matching problem with artificial input data.} TV-\eqref{eq:problem} problem with data term \eqref{eq:stereo_matching_data_term_simple}. We use $\lambda = 14 $ and isotropic TV \textbf{(top)}, and $\lambda = 7 $ and anisotropic TV \textbf{(bottom)}, respectively. Fig.~\ref{fig:rainbow} describes our input data in the isotropic setting \textbf{(top)}. In the anisotropic setting we use square cutouts instead \textbf{(bottom left)}. For this non-convex data term, the lifted Bregman iteration progressively adds components corresponding to eigenfunctions of the TV regularizer to the depth map that. Components associated with larger eigenvalues appear earlier.
}
\label{fig:rainbow_results}
\end{figure*}

\subsection{Non-Convex Stereo Matching with Real-World Data}

We also computed results 
for a stereo-matching problem with real life data.   We used TV-\eqref{eq:problem} and the  data term \cite{sublabel_cvpr}
\bfl
\rho(x,u(x)) =   \int_{W(x)} \sum_{j=1,2} h_\tau(d_j(I_1(y), I_2(y))) \text{d}y .
\label{eq:stereo_matching_data_term}
\efl
Here, $W(x)$ denotes a patch around $x$, $h_\tau$ is the truncation function with threshhold $\tau$ and $d_j$ is the absolute gradient difference
\bfl
d_j(I_1(y), I_2(y)) = | \partial_{x_j} I_1(y_1,y_2+u(x)) - \partial_{x_j} I_2(y_1,y_2)|.
\efl
 This data term is non-convex and non-linear in~$u$. We applied the lifted Bregman iteration on three data sets \cite{data_middlebury_bike} using $L=5$ labels, the isotropic TV regularizer and untransformed subgradients. The results can be seen in Fig.~\ref{fig:stereo_profile} (Motorbike: $\lambda =20$, $k=30$) and Fig.~\ref{fig:results2} (Umbrella: $\lambda = 10$; Backpack: $\lambda = 25$). We also ran the experiment with an anisotropic TV regularizer as well as transformed subgradients. Overall, the behavior was similar, but transforming the subgradients led to more pronounced jumps compared to the isotropic case.

Again, the evolution of the depth map throughout the iteration is reminiscent of an inverse scale space flow. The first solution is a smooth approximation of the depth proportions and as the iteration continues, finer structures are added. This behaviour is also visible in the progression of the horizontal profiles depicted in Fig.~\ref{fig:stereo_profile}.

\newcommand{\circimgst}[1]{\includegraphics[width=.22\linewidth]{#1}}
\begin{figure*}[]
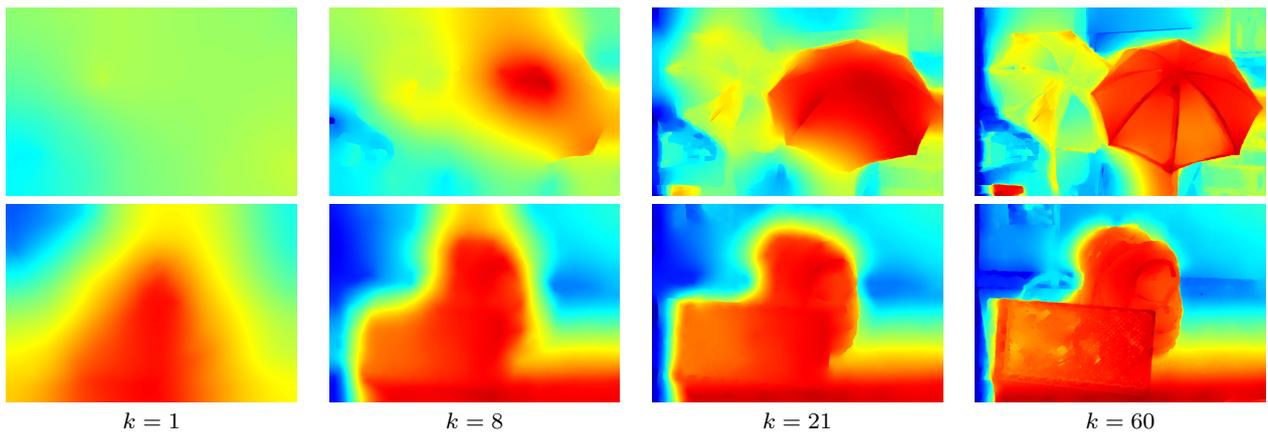

  \begin{center}
  \begin{tabular}{cccc}
    \circimgst{./Images/numerical_results/umbrella/isotropic/none/umb_1}&
    \circimgst{./Images/numerical_results/umbrella/isotropic/none/umb_8}&
    \circimgst{./Images/numerical_results/umbrella/isotropic/none/umb_21}&
    \circimgst{./Images/numerical_results/umbrella/isotropic/none/umb_60} 
     \\   
    \circimgst{./Images/numerical_results/backpack/isotropic/none/back_1}&
    \circimgst{./Images/numerical_results/backpack/isotropic/none/back_8}&
    \circimgst{./Images/numerical_results/backpack/isotropic/none/back_21}&
    \circimgst{./Images/numerical_results/backpack/isotropic/none/back_60} 
    \\
    $ k=1$ & $ k=8$ & $ k=21$ & $ k=60$ 
    \end{tabular}%
  \end{center}%
\vspace{-1em}
\caption{\textbf{Lifted Bregman for non-convex non-linear scale space.} Shown are the Bregman iterations for the non-convex TV-\eqref{eq:problem} stereo matching problem with data term \eqref{eq:stereo_matching_data_term}, evaluated on the Umbrella and Backpack data set from the Middlebury stereo datasets \cite{data_middlebury_bike}.  At $k=1$ the solution is a coarse approximation of the depth field. As the iteration advances, details are progressively recovered according to scale. Although the problem is \emph{not} of the convex and positively one-homogeneous form OH-\eqref{eq:problem} classically associated with the inverse scale space flow, the results show a qualitative similarity to a nonlinear scale space for this difficult non-convex problem. }
\label{fig:results2}
\end{figure*}

\section{Conclusion}
\label{sec:conclusion}

We have proposed a combination of the Bregman iteration and a lifting approach with sublabel-accurate discretization in order to extend the Bregman iteration to non-convex energies such as stereo matching. If a certain form of the subgradients can be ensured -- which can be shown \dbdetail under some assumptions in the continuous case as well as the discretized case \dbold in particular for total variation regularization -- the iterates agree in theory and in practice for the classical convex ROF-\eqref{eq:problem} problem. \dbreformulated The numerical experiments show behavior in the non-convex case that is very similar to what one expects in classical inverse scale space. This opens up a number of interesting theoretical questions, such as the decomposition into non-linear eigenfunctions, as well as practical applications such as non-convex scale space transformations and nonlinear filters for arbitrary nonconvex data terms. \dbold

%\begin{acknowledgements}
%The authors acknowledge support through DFG grant LE 4064/1-1 “Functional Lifting 2.0: Efficient Convexifications for Imaging and Vision” and NVIDIA Corporation.
%\end{acknowledgements}

% Authors must disclose all relationships or interests that 
% could have direct or potential influence or impart bias on 
% the work: 
%
\section*{Conflict of interest}
The authors declare that they have no conflict of interest.

% BibTeX users please use one of
\bibliographystyle{splncs04}      % CURRENT
\bibliography{21_06_JMIV_InverseScaleSpaceIterationsUsingFunctionalLifting}   % name your BibTeX data base

\end{document}